\numberwithin{equation}{section}
\newtheorem{theorem}{Theorem}[section] 
\newtheorem{lemma}[theorem]{Lemma}  
\newtheorem{proposition}[theorem]{Proposition}  
\newtheorem{definition}[theorem]{Definition} 
\newtheorem{question}[theorem]{Question} 
\newtheorem{conjecture}[theorem]{Conjecture}
\def\bR{\mathbb{R}}
\def\FI{\mathcal{F}_I}
\def\Fq#1{\mathbb{F}_q^{#1}}
\def\norm#1{\left\lVert#1\right\lVert}
\def\pnorm#1#2{|#2|_{#1}}
\def\knorm#1#2{\lVert#2\rVert_{#1}}
\def\CG{\mathcal{G}_1}
\title{The Mattila-Sj\"{o}lin problem for the $k$-distance over a finite field}
\author{Daewoong Cheong}
\address{Chungbuk National University, Department of Mathematics, Chungdae-ro 1, Seowon-Gu, Cheongju City, Chungbuk 28644, Korea}
\email{daewoongc@chungbuk.ac.kr}
\author{Hunseok Kang}
\address{College of Engineering and Technology, American University of the Middle East, Kuwait}
\email{hunseok.kang@aum.edu.kw}
\author{Jinbeom Kim}
\address{Chungbuk National University, Department of Mathematics,
Chungdae-ro 1, Seowon-Gu, Cheongju City, Chungbuk 28644, Korea}
\email{jinbeom337@chungbuk.ac.kr}
\begin{document}

\begin{abstract}
Let $\Fq{d}$ be a $d$-dimensional vector space over a finite field $\Fq{}$ with $q$ elements. For $x\in \Fq{d}$, let $\knorm{}{x} = x_1^2+\cdots +x_d^2$. By abuse of terminology, we shall call $\knorm{}{\cdot}$ a norm on $\Fq{d}$. For a subset $E\subset \Fq{d}$, let $\Delta(E)$ be the distance set on $E$ defined as \[\Delta(E):=\{\knorm{}{x-y} : x, y \in E \}.\]  The Mattila-Sj\"{o}lin problem seeks the smallest exponent $\alpha>0$ such that  $ \Delta(E) =\Fq{}$ for all subsets $E \subset \Fq{d}$ with $|E| \geq Cq^\alpha$.
 In this article, we consider this problem for a variant of this norm, which generates a smaller distance set  than the norm $\knorm{}{\cdot}.$   Namely,  we replace the norm $\knorm{}{\cdot}$ by the so-called $k$-norm  $(1 \leq k \leq d)$, which can be viewed as a kind of deformation of $\knorm{}{\cdot}$. To derive our  result on the Mattila-Sj\"{o}lin problem for the $k$-norm, we use a combinatorial method to analyze various summations arising from the discrete Fourier machinery. Even though our distance set is smaller than the one in the  Mattila-Sj\"{o}lin problem, for some $k$ we still obtain the same result as that of Iosevich and Rudnev \cite{IR07}, which deals with the Mattila-Sj\"{o}lin problem. Furthermore, our result is sharp in all odd dimensions.
\end{abstract}

\maketitle

\section{Introduction}
In geometric measure theory, the Falconer distance problem asks for a minimal Hausdorff dimension $\dim_{\mathcal{H}}(E)$ of compact subsets $E \subset \mathbb{R}^d $ for which  the distance set  
\[
\Delta(E) := \{ \knorm{}{x-y} : x, y \in E \}
\]  
has positive Lebesgue measure, where $\knorm{}{\cdot}$ denotes the Euclidean norm on $\bR^d$. In 1985, Falconer \cite{Fa85} conjectured that  for each  compact subset $ E \subset \mathbb{R}^d $ ($ d \geq 2 $),  if  $\dim_{\mathcal{H}}(E) > d/2$, then the distance set $\Delta(E)$ has positive Lebesgue measure. Falconer then proved that this is the case  when $ \dim_{\mathcal{H}}(E) > (d+1)/2 $. While some progress has been made, the question remains open in all dimensions (for example, see references \cite{Bo94, Wo99, Er05, DGOWWZ18, GIOW19, DZ19, DIOWZ21, DORZ23}).

Also, Mattila and Sj\"{o}lin \cite{MS99} obtained a stronger result that if $ \dim_{\mathcal{H}}(E) > (d+1)/2 $, then $ \Delta(E) $  has a non-empty interior as a subset of $\mathbb R$. Note that the lower bounds in both results are far from the conjectured dimension threshold $d/2$.

The Falconer distance problem can also be considered a continuous version of the Erd\H{o}s distinct distances problem, which asserts that large finite point sets must have many distinct distances. We refer to references \cite{BKT}, \cite{Er46}, \cite{GK10}, \cite{KT04}, and \cite{SV08} for a more precise definition of the Erd\H{o}s distinct distance problem, as well as the known results and conjecture.

On the other hand, Iosevich and Rudnev \cite{IR07} considered a discrete analogue of the Falconer distance problem over a finite field, 
which in the literature is referred to as the Erd\H{o}s-Falconer distance problem (EF problem). Let us introduce the EF problem. For  $x, y\in \Fq{d}$, a distance between $x$ and $y$ is defined by
\begin{equation*}
\|x - y\| := (x_1 - y_1)^2 + \cdots + (x_d - y_d)^2.
\end{equation*}
By abuse of terminology, we shall call this a standard distance on $\Fq{d}$. For a subset $E \subset \Fq{d}$, the  distance set $\Delta(E)$ is given by
\begin{equation}\label{Emetric}
\Delta(E) := \{\|x - y\| : x, y \in E \}.
\end{equation}

The EF problem asks for the smallest exponent $\alpha>0$ for which $|\Delta(E)| \gtrsim q$ whenever $|E| \geq Cq^\alpha$ for a sufficiently large constant $C$. Here, and throughout this paper, $A \gtrsim B$ means $A \geq cB$ where $0 < c \leq 1$ denotes some constant independent of $q$.

There is a stronger version of the EF problem, referred to as the Mattila-Sj\"{o}lin problem (MS problem), which can be stated as follows:
\begin{question}[The MS problem]\label{Q1}
    Determine the smallest exponent $\beta > 0$ such that if $E\subset \mathbb F_q^d$ with $|E| \geq Cq^{\beta}$ for a sufficiently large $C$, then $\Delta(E) = \Fq{}$.
\end{question}

Let us first list some results for the MS problem. Iosevich and Rudnev \cite{IR07} showed that $ \Delta(E) = \mathbb{F}_q $  whenever $ |E| > 2q^{(d+1)/2}$. Since then, much effort was made by many researchers to lower the exponent $\frac{d+1}{2}$. Hart et al. \cite{HIKR10} demonstrated that the exponent $\frac{d+1}{2}$ due to Iosevich and Rudnev \cite{IR07} is indeed sharp for the specific odd dimensions $d$ and under certain assumptions on $\mathbb F_q$.  In addition, they showed that  the exponent $d/2$ for the EF problem cannot be lowered for all even dimensions $d$, which implies that the exponent $\alpha$ should be greater or equal to $\frac{d}{2}$ in this case. Very recently, Iosevich, Koh and Rakhmonov \cite{FIK24} provided another examples to show that for all $d\geq 3,$ the exponent $\alpha$ for EF problem should be greater or equal to $\frac{d}{2}$ (resp. $\frac{d+1}{2}$) for even (resp. odd) $d.$

Let us review some known results for the EF problem. For $d=2$, Chapman et al. \cite{CEHIK10} obtained the exponent $ 4/3 $ for the EF problem, whereas it was shown in \cite{MP19} that the exponent cannot be lowered beyond $4/3$ for the MS problem. This result was obtained by utilizing a relationship between the EF problem and the Fourier restriction problem, and it is the first result to show that the optimal result $(d+1)/2$ in odd dimensions can be improved in even dimensions. In the case where $\mathbb F_q$ is a prime field, the result of $4/3$ has been improved to $5/4$ by Murphy,  Petridis,  Pham,  Rudnev and  Stevens \cite{MPPRS20}. However, in even higher dimensions $d\ge 4,$ there is no known result that improves the result of $(d+1)/2$  for the EF problem.  

The main purpose of this paper is to introduce a modified distance, which generates a smaller distance set than the standard distance does, and show that the distance result of \((d+1)/2\) can still be achieved under this new distance.

Now we present our distance problem and main result.
For $ x = (x_1, x_2, \cdots, x_d) \in \mathbb{F}_q^d $, let $\mathcal{Z}(x)$ denote the number of zero coordinates of $ x $. Fix $k \in \{1, 2, \cdots, d\}$. Define a $k$-norm $\|\cdot\|_{k}$ on $\Fq{d}$ as
\begin{equation}\label{kdst}
\|x\|_{k} = 
\begin{cases}
\|x\| & \text{if } 0 \leq \mathcal{Z}(x) \leq k-1, \\
0 & \text{otherwise.}
\end{cases}
\end{equation}
Note that if $k=d$, two norms coincide with each other; 
$\knorm{}{\cdot} = \knorm{d}{\cdot}$. Thus the $k$-norm $\knorm{k}{\cdot}$ may be considered a deformation of the standard norm $\knorm{}{\cdot}$ such that  it is more deformed as $k$ gets smaller.
The distance induced by $k$-norm shall be called a $k$-distance.
For a subset $E\subset \mathbb F_q^d$, define a $k$-distance set 
\[
D_k(E) = \{\|x - y\|_{k} : x, y \in E\}.
\]
With this newly defined distance set, one can ask the following  natural question concerning a distance-related problem.
\begin{question} [The MS problem for $k$-distance sets]\label{Q2}
    What is the smallest exponent $\gamma > 0$ such that whenever $E\subset \mathbb F_q^d$ with $|E| \geq Cq^{\gamma}$ for a sufficiently large $C$, we have $D_k(E) = \Fq{}$?
\end{question}
Here, we observe that the MS problem for \(k\)-distance sets in Question~\ref{Q2} can be viewed as a significantly stronger version of the original MS problem posed in Question~\ref{Q1}, in the sense that the equality \( D_k(E) = \mathbb{F}_q \) is  less likely to be achieved than the equality \( \Delta(E) = \mathbb{F}_q \) since  $D_k(E) \subset \Delta(E)$ for all $k = 1,2,\cdots,d$.
 This suggests that for odd dimensions \( d \), the smallest possible exponent \( \gamma \) for Question~\ref{Q2} cannot be smaller than the optimal result \( (d+1)/2 \) of the MS problem in Question~\ref{Q1}, and for even dimensions, it cannot be smaller than the conjectured result \( d/2 \) of the same problem. Moreover, for each integer \( k = 1, 2, \ldots, d \), if we take \( E = \mathbb{F}_q^{d-k} \times \{ \mathbf{0}\} \subset \mathbb{F}_q^d \), then \( |E| = q^{d-k} \), yet \( D_k(E) = \{ 0 \} \neq \mathbb{F}_q \). From this example, we can see that the smallest exponent \( \beta \) for Question \ref{Q2} cannot be less than \( d-k \).  Therefore, one can propose the following conjecture on the MS problem for $k$-distance sets.
\begin{conjecture}\label{mcon}  Let $E\subset \mathbb F_q^d$, $d\ge 2,$ and  assume that $C$ is a sufficiently large constant independent of $q.$  Then for each integer $k=1,2, \ldots, d,$  the following statements hold:

\begin{enumerate}[label=\textup{(\roman*)}, ref=\textup{(\roman*)}]
\item \label{oddconj} If $d$ is odd and $|E|\ge C  q^{\max\{(d+1)/2, ~{d-k}\}},$   then $D_k(E) = \mathbb{F}_q.$
\item \label{evenconj}  If $d$ is even and $|E|\ge C  q^{\max\{d/2, ~{d-k}\}},$   then $D_k(E) = \mathbb{F}_q.$
\end{enumerate}
\end{conjecture}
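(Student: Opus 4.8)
The plan is to prove the quantitative bound that $|E|\ge Cq^{\max\{(d+1)/2,\,d-k\}}$, for a constant $C$ depending only on $d$, already forces $D_k(E)=\mathbb{F}_q$. This settles Conjecture~\ref{mcon}\ref{oddconj} outright, and it settles the part of Conjecture~\ref{mcon}\ref{evenconj} in which $d-k\ge(d+1)/2$; the remaining even-dimensional range is tied to lowering the sphere exponent below $(d+1)/2$, which is out of reach for $d\ge4$. Since $\mathcal{Z}(0)=d\ge k$ gives $\knorm{k}{0}=0$, we have $0\in D_k(E)$ for free, so it suffices to show that
\[
\nu_k(t):=|\{(x,y)\in E\times E:\knorm{k}{x-y}=t\}|>0\quad\text{for every }t\in\mathbb{F}_q\setminus\{0\}.
\]
For $t\ne0$, $\knorm{k}{x-y}=t$ holds exactly when $\knorm{}{x-y}=t$ and $\mathcal{Z}(x-y)\le k-1$, so substituting the Bonferroni identity $\mathbf{1}[\mathcal{Z}(v)\ge k]=\sum_{j\ge k}(-1)^{j-k}\binom{j-1}{k-1}\sum_{|I|=j}\prod_{i\in I}\mathbf{1}[v_i=0]$ produces the exact decomposition
\begin{equation*}
\nu_k(t)=\nu(t)-\sum_{j=k}^{d-1}(-1)^{j-k}\binom{j-1}{k-1}\sum_{|I|=j}N_I(t),
\end{equation*}
where $\nu(t):=|\{(x,y)\in E\times E:\knorm{}{x-y}=t\}|$ and, for $|I|=j$, $N_I(t):=|\{(x,y)\in E^2:x_i=y_i\ \forall\, i\in I,\ \knorm{}{x-y}=t\}|$ (the $j=d$ term is absent, since $t\ne0$ forces $x\ne y$).

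The second step is to evaluate each piece by the discrete Fourier transform. For $\nu(t)$ this is the Iosevich--Rudnev computation \cite{IR07}: writing $S_t=\{v\in\mathbb{F}_q^d:\knorm{}{v}=t\}$, the Gauss/Kloosterman--Sali\'e bound $|\widehat{S_t}(m)|\lesssim q^{(d-1)/2}$ for $m\ne0$, together with $|S_t|=q^{d-1}+O(q^{(d-1)/2})$ and Plancherel, yields $\nu(t)=\tfrac{|S_t|}{q^d}|E|^2+O(q^{(d-1)/2}|E|)$. For $N_I(t)$ with $|I|=j$, I would group the pairs $(x,y)$ by their common value $z\in\mathbb{F}_q^{\,j}$ on the coordinates in $I$; this writes $N_I(t)=\sum_{z}\nu^{(d-j)}_{E_z}(t)$ as a sum of ordinary sphere counts in dimension $d-j$ for the slices $E_z\subset\mathbb{F}_q^{\,d-j}$. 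Since the above Fourier estimate remains valid with the exponent $(d-j-1)/2$ in every dimension $1\le d-j\le d$ — including on the isotropic cone $\{\knorm{}{m}=0\}$ as long as $t\ne0$ — this gives $N_I(t)=\tfrac1q\sum_z|E_z|^2+O_d(q^{(d-j-1)/2}|E|)$, the discrepancies $|S_t^{(n)}|\ne q^{n-1}$ being absorbed into the error via $\sum_z|E_z|=|E|$, $\sum_z|E_z|^2\le q^{d-j}|E|$, and $|E|\le q^d$.

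The heart of the argument is the combinatorial accounting of the main terms. Summing over $I$ and using the double-counting identity $\sum_{|I|=j}\sum_z|E_z|^2=\sum_{(x,y)\in E^2}\binom{\mathcal{Z}(x-y)}{j}$ together with the Bonferroni identity once more collapses the leading behaviour to
\begin{equation*}
\nu_k(t)=\frac1q\,\bigl|\{(x,y)\in E^2:\mathcal{Z}(x-y)\le k-1\}\bigr|+O_d\bigl(q^{(d-1)/2}|E|\bigr).
\end{equation*}
For the lower bound on the main term, note $|\{(x,y)\in E^2:\mathcal{Z}(x-y)\ge k\}|\le\sum_{|I|=k}\sum_z|E_z|^2\le\binom{d}{k}q^{d-k}|E|$, so the main term is at least $\tfrac1q\bigl(|E|^2-\binom{d}{k}q^{d-k}|E|\bigr)\ge\tfrac1{2q}|E|^2$ once $|E|\ge2\binom{d}{k}q^{d-k}$; and $\tfrac1{2q}|E|^2$ dominates the $O_d(q^{(d-1)/2}|E|)$ error once $|E|\gtrsim_d q^{(d+1)/2}$. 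Taking $C$ large enough to enforce both inequalities gives $\nu_k(t)>0$ for all $t\ne0$, hence $D_k(E)=\mathbb{F}_q$.

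I expect the genuinely delicate points to be, first, keeping the lower-dimensional Gauss-sum estimates uniform in $j$, in particular controlling the isotropic directions $\knorm{}{m}=0$ and treating with care the small dimension $d-j=1$ at $j=d-1$; and second, recognising that the inequality $\sum_z|E_z|^2\le q^{d-j}|E|$ is exactly what forces the threshold $q^{d-k}$ and cannot be bettered, because the slab $E=\mathbb{F}_q^{\,d-k}\times\{\mathbf{0}\}$ saturates it while producing $D_k(E)=\{0\}$. The one regime in which this scheme genuinely stalls is even $d$ with $k>(d-1)/2$: matching the conjectured exponent $d/2$ there would require the sphere exponent to drop below $(d+1)/2$, which for $k=d$ is the ordinary Mattila--Sj\"{o}lin problem, open in all even dimensions $d\ge4$; those cases of Conjecture~\ref{mcon}\ref{evenconj} therefore lie beyond the present method.
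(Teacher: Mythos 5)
Your argument is correct as far as it goes, and it lands exactly where the paper does: it establishes the threshold $|E|\ge Cq^{\max\{(d+1)/2,\ d-k\}}$ (the paper's Theorem \ref{first}), hence part \ref{oddconj} of the conjecture in full and part \ref{evenconj} only in the range $d-k\ge(d+1)/2$; neither you nor the paper proves the remaining even-dimensional cases, and you correctly identify that they contain (for $k=d$) the open Mattila--Sj\"olin problem. The route, however, is genuinely different from the paper's. The paper stays entirely on the Fourier side: it computes $\widehat{S_{k}^t}(m)$ explicitly by slicing $\mathbb{F}_q^d$ according to zero-coordinate patterns (Proposition \ref{FSF}), bounds the oscillatory part $A(m,t)$ by Kloosterman sums, and extracts the main term from the combinatorial part $B(m)$ through a chain of reindexings ending in the cancellation $\sum_{r}(-1)^r\binom{d-w}{r}=0$ (Lemma \ref{main}). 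You instead perform the inclusion--exclusion on the counting side, expanding $\mathbf{1}[\mathcal{Z}(v)\ge k]$ by the Bonferroni/Jordan ``at least $k$'' identity so that $\nu_k(t)$ becomes a signed combination of ordinary distance counts in the fibers $E_z\subset\mathbb{F}_q^{d-j}$, each handled by the classical Iosevich--Rudnev estimate as a black box; this is legitimate uniformly in the fiber dimension $d-j\ge1$, including isotropic frequencies, precisely because $t\ne0$ throughout. Your version buys modularity and transparency --- the two error terms $q^{(d-1)/2}|E|$ and $q^{-1}\binom{d}{k}q^{d-k}|E|$ appear with clear meanings, and no Fourier transform of the $k$-sphere is ever needed --- at the cost of tracking lower-dimensional sphere-size discrepancies; your absorption of these via $\sum_z|E_z|^2\le q^{d-j}|E|$ is the right bookkeeping, and the dropped $j=d$ term perturbs the main term only by $O_d(q^{-1}|E|)$, which is harmless. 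The paper's version keeps everything inside one Fourier identity but pays with the heavier combinatorics of its terms $B_{m1}$, $B_{m2}$, $B_{m3}$.
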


As our main result,  for odd dimension case we establish Conjecture \ref{mcon}-\ref{oddconj};  for the even dimension case we obtain a bit weaker result than in the conjecture.  More precisely, our result is stated as follows.
\begin{theorem}\label{first}  
If $E\subset \mathbb F_q^d$, $d\ge 2,$ with $|E|\ge C  q^{\max\{(d+1)/2, ~{d-k}\}},$  then  $ D_k(E) = \mathbb{F}_q $.
\end{theorem}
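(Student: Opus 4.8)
\emph{Proof idea.} Fix $t\in\mathbb{F}_q$; it suffices to prove that $\nu_k(t):=|\{(x,y)\in E\times E:\|x-y\|_k=t\}|$ is positive. For $t=0$ this holds trivially (take $x=y$), so assume $t\neq0$. By the definition \eqref{kdst} of the $k$-norm, a pair contributes to $\nu_k(t)$ precisely when $\|x-y\|=t$ and $\mathcal{Z}(x-y)\le k-1$. The plan is to pass to the discrete Fourier transform, extract a main term proportional to $|E|^2-B$, where $B$ counts the \emph{bad} pairs $(x,y)\in E^2$ with $\mathcal{Z}(x-y)\ge k$ (with no condition on the distance), and then bound $B$ by an elementary covering argument.

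First I would strip off the constraint $\mathcal{Z}(x-y)\le k-1$ by inclusion--exclusion. Using $\sum_{|S|=j}\prod_{i\in S}\mathbf{1}[z_i=0]=\binom{\mathcal{Z}(z)}{j}$ together with the Bonferroni-type identity $\sum_{j\ge k}(-1)^{j-k}\binom{j-1}{k-1}\binom{n}{j}=\mathbf{1}[n\ge k]$, and writing $\nu_S(t):=|\{(x,y)\in E^2:\|x-y\|=t,\ (x-y)_i=0\ \forall i\in S\}|$, one obtains
\[
\nu_k(t)=\nu(t)-\sum_{j=k}^{d}(-1)^{j-k}\binom{j-1}{k-1}\sum_{|S|=j}\nu_S(t),
\]
where $\nu(t)=\nu_\emptyset(t)$ is the usual Iosevich--Rudnev distance counting function.

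Next I would apply the standard Fourier argument to every $\nu_S(t)$ at once. Detecting $\|x-y\|=t$ and $(x-y)_i=0$ ($i\in S$) by additive characters, applying Fourier inversion to the autocorrelation $\nu_E=E*\widetilde{E}$, and evaluating $\sum_{z}\chi(s\|z\|+w\cdot z)$ by completing the square into Gauss sums, the term $s=0$ yields the exact quantity $q^{-1-|S|}\sum_{\tilde u\in V_S}|\widehat{E}(\tilde u)|^2=M_S/q$, where $V_S$ is the coordinate subspace of vectors supported on $S$ and $M_S:=|\{(x,y)\in E^2:(x-y)_i=0\ \forall i\in S\}|$, while the terms $s\neq0$ are estimated through the Weil bound for Kloosterman/Salié sums and contribute at most $O_d(q^{(d-1)/2}|E|)$, uniformly in $S$. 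The crucial point is that the main terms collapse: since $\sum_{|S|=j}M_S=\sum_{x,y\in E}\binom{\mathcal{Z}(x-y)}{j}$, the same Bonferroni identity gives $\sum_{j\ge k}(-1)^{j-k}\binom{j-1}{k-1}\sum_{|S|=j}M_S=B$, so that
\[
\nu_k(t)=\frac{|E|^2-B}{q}+O_{d,k}\!\left(q^{(d-1)/2}|E|\right),\qquad B:=|\{(x,y)\in E^2:\mathcal{Z}(x-y)\ge k\}|.
\]

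Finally I would bound $B$. For fixed $x\in E$, the set of $y$ with $\mathcal{Z}(x-y)\ge k$ lies in the union of the $\binom{d}{k}$ affine coordinate subspaces $\{y:y_i=x_i\ \forall i\in S\}$ ($|S|=k$), each of size $q^{d-k}$, so it has at most $\binom{d}{k}q^{d-k}$ elements; summing over $x$ gives $B\le\binom{d}{k}q^{d-k}|E|$. Hence $|E|^2-B\ge|E|\bigl(|E|-\binom{d}{k}q^{d-k}\bigr)\ge\tfrac12|E|^2$ once $|E|\ge2\binom{d}{k}q^{d-k}$, and then $\nu_k(t)\ge\frac{|E|^2}{2q}-O_{d,k}(q^{(d-1)/2}|E|)>0$ once also $|E|\gg_{d,k}q^{(d+1)/2}$. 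Choosing $C$ larger than all the implied constants (which depend only on $d$ and $k$) makes both requirements follow from $|E|\ge Cq^{\max\{(d+1)/2,\,d-k\}}$; since $0\in D_k(E)$ trivially, this gives $D_k(E)=\mathbb{F}_q$. The main obstacle is the middle step: one must carry the Fourier analysis of the partially pinned counts $\nu_S(t)$ with error terms kept genuinely of size $q^{(d-1)/2}|E|$ rather than inflated by the inner sum over $V_S$, and then check that the inclusion--exclusion weights telescope the whole family $\{M_S/q\}$ exactly onto $(|E|^2-B)/q$; the Weil bound and the covering estimate for $B$ are then routine. It is precisely the emergence of $B$, the count of bad pairs with no distance condition, that makes the threshold $d-k$ transparent and matches the extremal example $E=\mathbb{F}_q^{d-k}\times\{\mathbf{0}\}$.
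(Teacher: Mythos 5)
Your argument is correct, and it reaches the same threshold $\max\{(d+1)/2,\,d-k\}$ by a genuinely different organization of the combinatorial part. The paper works entirely on the Fourier side: it computes $\widehat{S_{k}^t}(m)$ by slicing $\Fq{d}$ according to the zero-pattern of $x$, splits the result into a Gauss-sum piece $A(m,t)$ and a character-free piece $B(m)$, and then spends most of its effort on a multi-stage re-indexing of $\sum_m|\widehat{E}(m)|^2B(m)$ (the decomposition into $B_{m1},B_{m2},B_{m3}$, with $B_{m2}=0$ by the binomial theorem) to extract the main term $q^{-d}|E|^2$ and the error $q^{-k}|E|$. You instead apply Jordan's inclusion--exclusion identity $\sum_{j\ge k}(-1)^{j-k}\binom{j-1}{k-1}\binom{n}{j}=\mathbf{1}[n\ge k]$ on the physical side \emph{before} any Fourier analysis, reducing $\nu_k(t)$ to the pinned counts $\nu_S(t)$; the two uses of the same identity make the $s=0$ contributions telescope exactly onto $(|E|^2-B)/q$, where $B$ is the number of pairs with $\mathcal{Z}(x-y)\ge k$, and $B\le\binom{d}{k}q^{d-k}|E|$ is then a one-line covering bound. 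The analytic engine (Gauss-sum evaluation, the Weil bound for the Kloosterman/Sali\'e sums, Plancherel) is identical in both treatments, and you correctly identify and resolve the one delicate point, namely that the inner sum over $V_S$ (of size $q^{|S|}$) is exactly absorbed by the normalization $q^{-|S|}$, so each $\nu_S(t)$ carries an error of only $O(q^{(d-1)/2}|E|)$. What your route buys is transparency: the quantity $B$ is precisely the obstruction realized by the extremal example $E=\mathbb{F}_q^{d-k}\times\{\mathbf{0}\}$, so the exponent $d-k$ appears for a visible structural reason rather than emerging from the bound $|B_{auxiliary}|\lesssim q^{-k}|E|$; the cost is that you must track the inclusion--exclusion weights carefully (their total size is $O_{d,k}(1)$, so this is harmless). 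One cosmetic slip: with the paper's normalization $\widehat{E}(m)=q^{-d}\sum_x E(x)\chi(-x\cdot m)$, the $s=0$ term of $\nu_S(t)$ is $q^{2d-1-|S|}\sum_{u\in V_S}|\widehat{E}(u)|^2$, not $q^{-1-|S|}\sum_{u\in V_S}|\widehat{E}(u)|^2$; the identification of this quantity with $M_S/q$ is nevertheless correct, so nothing downstream is affected.
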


Given the paragraph subsequent to Question \ref{Q2}, our result should be compared with that a result for the MS problem, e.g., that of  Iosevich and Rudnev \cite{IR07}.  
 Notice that our exponent in the assumption of Theorem \ref{first} can be divided into two cases
$$\max\left\{(d+1)/2, ~{d-k}\right\} =\left\{\begin{array}{ll}
(d+1)/2 & \text{if }  (d-1)/2 \le k \le d, \\
d-k & \text{if } 1\le k<(d-1)/2. \end{array} \right.$$
For  $1\le k< (d-1)/2,$ the exponent $d-k$ in our result is larger than $(d+1)/2$, the exponent of  Iosevich and Rudnev, as expected from the paragraph subsequent to Question \ref{Q2}. However, for  $k\ge (d-1)/2$, our exponent $ (d+1)/2$ is as small as  the exponent of Iosevich and Rudnev \cite{IR07}.
That is, even under the weaker condition   $D_k(E) \subset \Delta(E)$ in our case, we obtain the same result as that of Iosevich and Rudnev. 

From the point of view of a deformation of the standard distance, this comparison tells us that the result of the original MS problem remains the same under the ``small" deformation, which corresponds to $k$ with $ (d-1)/2 \le k \le d $.

In the course of proving this, we adopt an analytic approach based on the Fourier analysis machinery, a key tool used in previous studies of the MS problem. However, unlike in the original MS problem, there arises a difficulty in proving  Theorem \ref{first}, namely we encounter  many terms in taking sums that are difficult to control. To overcome this difficulty, we introduce a new elimination method based on combinatorial ideas (see, for example, the proof of Lemma \ref{main}, which constitutes the core of the proof of Theorem \ref{first}).

This paper  is organized as follows. In Section \ref{Ch2}, we give preliminaries which includes some basics on the Gauss sums, 
the orthongality of characters and the discrete Fourier transforms. In Section \ref{Ch3}, we introduce the $k$-distance and compute the Fourier transform on the sphere with respect to the $k$-distance. In Section \ref{Ch4}, we give a proof of Theorem~\ref{first}.
In Section~\ref{Ch5}, we carry out a technical computation of Lemma~\ref{main},  which will be used in the proof of the main theorem in Section~\ref{Ch4}.

\subsection*{Acknowledgements}
The first named author was supported by a funding for the academic research program of Chungbuk National University in 2025, and the National Research Foundation of Korea (NRF-2021R1I1A3049181).

\section{Preliminaries}\label{Ch2}
In this section, we list some basics on the Gauss sum and the Discrete Fourier transform machinery used in \cite{IR07} $($see also \cite{KS13} and \cite{CKPS}$)$. 

\subsection{Gauss sum}
An additive (resp. a multiplicative) character of $\Fq{}$ is a group homomorphism from  the group $\Fq{}$ (resp.  $\Fq{*}$) to the unit circle $S^1$ on the complex plane. The following two characters are ubiquitous in this paper.

For each $b\in \Fq{},$ let  
$\chi_b$ : $\mathbb{F}_q\rightarrow S^1$ be the additive character defined by 
\[
	\chi_b(c) = e^{2\pi i \mathrm{Tr}(bc) / p},
\]
where $p$ is the characteristic of $\Fq{}$, and $\mathrm{Tr}(\alpha)=\mathrm{Tr}_{\Fq{}/\mathbb{F}_p}(\alpha)$ denotes the absolute trace of $\alpha$ (see \cite[Definition 2.22]{LN97}).

Let $\eta:\Fq{*}\rightarrow S^1$ be the multiplicative character defined by
$\eta(c)= 1$ if $c$ is a square and $1$ otherwise. 

Characters of a finite abelian group $G$ satisfy the  orthogonality property below which will be frequently used  for $G=\mathbb F_q $ and $\mathbb F_q^*$ in our computation. 
\begin{proposition}[\textbf{Orthogonality of characters}]
\label{Orthogonality}
Let $G$ be a finite abelian group and $\phi$ a character of $G$. Then we obtain
\[
\sum_{g \in G} \phi(g) =
\begin{cases}
0 & \text{if } \phi \text{ is nontrivial}, \\
|G| & \text{otherwise}.
\end{cases}
\]\end{proposition}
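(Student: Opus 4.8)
The plan is to split into the two cases in the statement and, in the harder (nontrivial) case, to exploit the group law via a reindexing trick. If $\phi$ is the trivial character, then $\phi(g)=1$ for every $g\in G$, so $\sum_{g\in G}\phi(g)=|G|$ and there is nothing more to do; this disposes of the second alternative.

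Suppose now $\phi$ is nontrivial. By definition this means there is some $h\in G$ with $\phi(h)\neq 1$. Write $S=\sum_{g\in G}\phi(g)$. Since $\phi$ is a group homomorphism and the map $g\mapsto hg$ is a bijection of $G$ onto itself, I would compute
\[
\phi(h)\,S=\sum_{g\in G}\phi(h)\phi(g)=\sum_{g\in G}\phi(hg)=\sum_{g'\in G}\phi(g')=S .
\]
Hence $\bigl(\phi(h)-1\bigr)S=0$, and since $\phi(h)-1$ is a nonzero element of $\complex$, we conclude $S=0$, as claimed.

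I do not expect any real obstacle here: the only ingredients are that $\phi$ respects the group operation, that translation by a fixed element permutes $G$, and that $\complex$ has no zero divisors. The single point to state with care is the existence of an element $h$ with $\phi(h)\neq 1$, which is exactly what nontriviality of $\phi$ means. I would also remark that commutativity of $G$ is not actually used — the same one-line argument applies to any finite group — but since the proposition is only invoked for $G=\Fq{}$ and $G=\Fq{*}$ in the sequel, the abelian formulation is all that is needed.
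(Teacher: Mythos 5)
Your proof is correct and complete. For the record, the paper does not prove this proposition at all: it simply cites \cite[Example 5.10]{LN97}, so your write-up supplies a self-contained argument where the paper defers to a reference. The argument you give is the classical one (and is the one found in the cited source): in the trivial case each summand is $1$, and in the nontrivial case you pick $h$ with $\phi(h)\neq 1$, use that translation $g\mapsto hg$ permutes $G$ together with multiplicativity of $\phi$ to get $\phi(h)S=S$, and conclude $S=0$ since $\complex$ has no zero divisors. Your side remark is also accurate: commutativity of $G$ is never used for a one-dimensional character $\phi\colon G\to S^1$, so the statement holds for any finite group, though the abelian case suffices for the applications to $\Fq{}$ and $\Fq{*}$ in this paper. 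No gaps.
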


\begin{proof}
See  \cite[Example 5.10]{LN97}.
\end{proof}

Now we review the basic facts about the Gauss sum, which plays a crucial role in the proof of Theorem \ref{first}.

\begin{definition}[\textbf{Gauss sum}]\label{GaussSumDf}
Let $\psi$ (resp. $\chi$)  be a multiplicative (resp.  an additive) character on $\mathbb{F}_q^*$ (resp. $\Fq{}$). Then, the Gauss sum of $\psi$ and $\chi$ is defined by
\begin{equation}
 \label{Gs}
 \mathcal{G}(\psi, \chi) = \sum_{c \in \mathbb{F}_q^*} \psi(c)\chi(c).
\end{equation}
\end{definition}
For $a \in \mathbb{F}_q$, let $\mathcal{G}_a$ be the Gauss sum of $\eta$ and $\chi_a$, i.e., $\mathcal{G}_a = \mathcal{G}(\eta, \chi_a)$.

\begin{proposition}[Theorem 5.15, \cite{LN97}]\label{ExGa} The standard Gauss sum $\mathcal{G}_1$ can be explicitly computed as follows.
\begin{equation}
 \mathcal{G}_1 =
 \begin{cases}
 (-1)^{s-1}q^{\frac{1}{2}} &\text{if } ~~ p \equiv 1 \pmod 4,\\
 (-1)^{s-1}\mathbf{i}^sq^{\frac{1}{2}} &\text{if }~~ p \equiv 3 \pmod 4,
 \end{cases}
\end{equation}
where $\mathbf{i} = \sqrt{-1}$, and $s$ is the integer with  $p^s = q$.
\end{proposition}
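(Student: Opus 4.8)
The plan is to reduce the statement over a general finite field $\mathbb{F}_q$ with $q=p^{s}$ to the prime field $\mathbb{F}_p$ by means of the Davenport--Hasse lifting relation, and there to invoke Gauss's classical determination of the sign of the quadratic Gauss sum. As a first step I would record the elementary half, the \emph{modulus}. Since $\eta$ is real-valued ($\pm1$) and $\overline{\chi_1(c)}=\chi_1(-c)$, one has $\overline{\mathcal{G}_1}=\eta(-1)\,\mathcal{G}_1$, and a routine orthogonality computation (Proposition~\ref{Orthogonality}) gives $|\mathcal{G}_1|^{2}=q$, hence
\[
\mathcal{G}_1^{\,2}=\eta(-1)\,q=(-1)^{(q-1)/2}\,q .
\]
In particular $\mathcal{G}_1\in\{\pm q^{1/2},\ \pm\mathbf{i}\,q^{1/2}\}$, so only its argument remains in question; the same reasoning over $\mathbb{F}_p$ shows that the analogous Gauss sum, which I denote $g_p$, satisfies $g_p^{\,2}=(-1)^{(p-1)/2}\,p$.

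\textbf{Reduction to the prime field.} The multiplicative character $\eta$ is the unique character of $\mathbb{F}_q^{*}$ of order two; since the norm $N_{\mathbb{F}_q/\mathbb{F}_p}\colon\mathbb{F}_q^{*}\to\mathbb{F}_p^{*}$ is surjective, $\eta_p\circ N_{\mathbb{F}_q/\mathbb{F}_p}$ is again of order two, so $\eta=\eta_p\circ N_{\mathbb{F}_q/\mathbb{F}_p}$. Likewise, by the very definition $\chi_1(c)=e^{2\pi i\,\mathrm{Tr}(c)/p}$ we have $\chi_1=\chi_1^{(p)}\circ\mathrm{Tr}_{\mathbb{F}_q/\mathbb{F}_p}$, where $\chi_1^{(p)}(t)=e^{2\pi i t/p}$. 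Thus both characters defining $\mathcal{G}_1$ are lifts from $\mathbb{F}_p$, so the Davenport--Hasse relation applies and yields
\[
\mathcal{G}_1=(-1)^{s-1}\,g_p^{\,s}\qquad\bigl(\text{equivalently } -\mathcal{G}_1=(-g_p)^{s}\bigr).
\]
Consequently, once $g_p$ is known, the theorem follows by a one-line simplification: if $g_p=p^{1/2}$ then $\mathcal{G}_1=(-1)^{s-1}q^{1/2}$, and if $g_p=\mathbf{i}\,p^{1/2}$ then $\mathcal{G}_1=(-1)^{s-1}\mathbf{i}^{\,s}q^{1/2}$, which are exactly the two cases of the statement.

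\textbf{The prime-field sign.} Using $\#\{n\in\mathbb{F}_p:n^{2}=c\}=1+\eta_p(c)$ for $c\neq 0$ together with $\sum_{c\in\mathbb{F}_p}\chi_1^{(p)}(c)=0$, one rewrites $g_p=\sum_{n=0}^{p-1}e^{2\pi i n^{2}/p}$. Its exact value, $g_p=p^{1/2}$ for $p\equiv 1\pmod 4$ and $g_p=\mathbf{i}\,p^{1/2}$ for $p\equiv 3\pmod 4$, is Gauss's theorem. I would obtain it by Schur's eigenvalue argument: the $p\times p$ matrix $F=\bigl(e^{2\pi i jk/p}\bigr)_{0\le j,k\le p-1}$ satisfies $F^{2}=p\,P$, where $P$ is the permutation matrix of $j\mapsto -j$ on $\mathbb{Z}/p\mathbb{Z}$, so the eigenvalues of $F$ lie in $\{\pm p^{1/2},\ \pm\mathbf{i}\,p^{1/2}\}$ with $\operatorname{mult}(p^{1/2})+\operatorname{mult}(-p^{1/2})=(p+1)/2$ and $\operatorname{mult}(\mathbf{i}\,p^{1/2})+\operatorname{mult}(-\mathbf{i}\,p^{1/2})=(p-1)/2$; since $\operatorname{tr}(F)=\sum_{j}e^{2\pi i j^{2}/p}=g_p$ and $\det F$ is an explicitly evaluable Vandermonde determinant in the $p$-th roots of unity, the four multiplicities, and hence $g_p$, are pinned down. (Alternatively one may argue via Poisson summation applied to a Gaussian, or via the classical contour integral; and since the statement is quoted as \cite[Theorem~5.15]{LN97}, one may instead simply cite it there.)

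\textbf{Main obstacle.} The reduction steps are pure character- and field-theoretic bookkeeping. The substance lies entirely in the last step: determining the \emph{argument} (not merely the modulus) of the quadratic Gauss sum over the prime field, which is the classical hard theorem of Gauss and requires an essentially analytic or linear-algebraic input beyond orthogonality.
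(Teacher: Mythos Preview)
Your outline is correct and is essentially the standard proof (modulus via orthogonality, Davenport--Hasse lifting to reduce to $\mathbb{F}_p$, then Gauss's sign determination), which is also how \cite[Theorem~5.15]{LN97} proceeds. The paper itself, however, gives no proof of this proposition at all: it is stated as a quotation from \cite{LN97} and used as a black box (only the immediate corollary $\mathcal{G}_1^{2}=\eta(-1)q$ and $|\mathcal{G}_1|=\sqrt{q}$ is actually needed downstream), so there is nothing in the paper to compare your argument against beyond the citation you already anticipated.
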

Squaring the $\CG$, we can easily see that
\begin{equation}\label{GSQETA}\mathcal{G}_1^2=\eta(-1)q, \, \, \, \mathrm{and}\, \, \, \, |\mathcal{G}_1|=\sqrt{q}.\end{equation}

The following lemma can be derived from the definition of Gauss sums. For the readers' convenience, we provide its proof.
\begin{lemma}\label{GaussSum}
For $a, b\in\mathbb F_q$ with $a\ne 0,$ and $v\in\mathbb F_q^d,$ we have
\begin{enumerate}[label=\textup{(\arabic*)}]
\item
\label{GaussSum1}
$\displaystyle \sum_{s\in\mathbb F_q}\chi_1(as^2)=\eta(a)\mathcal{G}_1$,
\item

\label{GaussSum2}
$\displaystyle \sum_{s\in \mathbb F_q}\chi_1(as^2+bs)
=\eta(a)\mathcal{G}_1\chi_1\left(-\frac{b^2}{4a}\right),$

\item
\label{GaussSum3}
$\displaystyle \sum_{u\in\mathbb{F}_q^d}
 \chi_1\left(a\|u\|+v\cdot u\right)=\eta^d(a)  \mathcal{G}_1^d  \chi_1\left(\frac{||v||}{-4a}\right).$

\end{enumerate}
\end{lemma}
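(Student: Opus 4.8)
The plan is to prove the three identities in the natural order, bootstrapping each from the previous one.

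\textbf{Part \ref{GaussSum1}.} First I would split off $s=0$ and pair up $s$ with $-s$. For $a \neq 0$ the map $s \mapsto as^2$ on $\mathbb{F}_q^*$ is two-to-one onto the set of elements of the form $at^2$; as $t$ ranges over $\mathbb{F}_q^*$ this set is the squares if $a$ is a square and the non-squares otherwise. Hence $\sum_{s \in \mathbb{F}_q}\chi_1(as^2) = 1 + 2\sum_{c \in \mathbb{F}_q^*}\left(\tfrac{1+\eta(c)}{2}\right)\chi_1(ac)$ after re-indexing, which collapses (using Proposition~\ref{Orthogonality} to kill $\sum_{c}\chi_1(ac)$) to $\sum_{c \in \mathbb{F}_q^*}\eta(c)\chi_1(ac)$. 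A final substitution $c \mapsto c/a$ together with $\eta(1/a) = \eta(a)$ turns this into $\eta(a)\mathcal{G}(\eta,\chi_1) = \eta(a)\mathcal{G}_1$. Alternatively one can simply cite the standard evaluation in \cite{LN97}; I would include the short argument for self-containedness as the paper promises.

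\textbf{Part \ref{GaussSum2}.} This is completing the square. Since $a \neq 0$ and (crucially) the characteristic is odd so that $4a$ is invertible, write $as^2 + bs = a\left(s + \tfrac{b}{2a}\right)^2 - \tfrac{b^2}{4a}$. The translation $s \mapsto s - \tfrac{b}{2a}$ is a bijection of $\mathbb{F}_q$, so $\sum_{s}\chi_1(as^2+bs) = \chi_1\!\left(-\tfrac{b^2}{4a}\right)\sum_{s}\chi_1(as^2) = \eta(a)\mathcal{G}_1\chi_1\!\left(-\tfrac{b^2}{4a}\right)$ by Part \ref{GaussSum1}.

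\textbf{Part \ref{GaussSum3}.} The sum over $u \in \mathbb{F}_q^d$ factors as a product over coordinates: writing $v = (v_1,\dots,v_d)$ and $u = (u_1,\dots,u_d)$, we have $a\|u\| + v \cdot u = \sum_{j=1}^d (au_j^2 + v_j u_j)$, so $\sum_{u \in \mathbb{F}_q^d}\chi_1(a\|u\|+v\cdot u) = \prod_{j=1}^d \sum_{u_j \in \mathbb{F}_q}\chi_1(au_j^2 + v_j u_j)$. Applying Part \ref{GaussSum2} to each factor gives $\prod_{j=1}^d \eta(a)\mathcal{G}_1\chi_1\!\left(-\tfrac{v_j^2}{4a}\right) = \eta(a)^d \mathcal{G}_1^d \chi_1\!\left(-\tfrac{\|v\|}{4a}\right) = \eta^d(a)\mathcal{G}_1^d\chi_1\!\left(\tfrac{\|v\|}{-4a}\right)$, using that $\chi_1$ is additive and $\sum_j v_j^2 = \|v\|$.

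There is no real obstacle here; the only point requiring a word of care is that all the manipulations in Part \ref{GaussSum2} rely on $2$ (hence $4a$) being invertible in $\mathbb{F}_q$, i.e. on $p$ being odd, which is the standing assumption throughout the paper since the norm $\|\cdot\|$ is a sum of squares. I would state this explicitly. The rest is bookkeeping: a two-to-one count and an application of orthogonality in Part \ref{GaussSum1}, a translation in Part \ref{GaussSum2}, and the factorization of an additive character over coordinates in Part \ref{GaussSum3}.
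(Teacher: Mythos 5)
Your proposal is correct and follows essentially the same route as the paper: for part \ref{GaussSum1}, split off $s=0$, use the two-to-one substitution $s^2=t$ with the indicator $1+\eta(t)$, apply orthogonality of $\chi_1$, and rescale $t\mapsto a^{-1}\theta$; parts \ref{GaussSum2} and \ref{GaussSum3} then follow by completing the square and factoring the character over coordinates, exactly as the paper indicates. Your explicit remark that odd characteristic is needed for $4a$ to be invertible is a fair point the paper leaves implicit.
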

\begin{proof}

To demonstrate \ref{GaussSum1}, we observe that
\begin{equation*}
\sum_{s\in\mathbb F_q}\chi_1(as^2)
=1+\sum_{s\in\mathbb F_q^*}\chi_1(as^2).
\end{equation*}

As $(-s)^2=s^2$ for any $s\in\mathbb F_q^*$ we can perform a change of variables, setting $s^2 = t$. Consequently,
\begin{align*}
1+\sum_{s\in\mathbb F_q^*}\chi_1(as^2)
&=1+2\sum_{\substack{t\in\mathbb F_q^*
 \\\emph{:t is a square}}}\chi_1(at) = 1+\sum_{t\in\mathbb F_q^*}\chi_1(at)(\eta(t)+1)\\
&=1+\sum_{t\in\mathbb F_q^*}\chi_1(at)
 +\sum_{t\in\mathbb F_q^*}\chi_1(at)\eta(t) = \sum_{t\in\mathbb F_q^*}\chi_1(at)\eta(t).
\end{align*}
The last equality arises from the orthogonality of characters for $\chi_1$. Now, employing a change of variables $t = a^{-1}\theta$ and the relation $\eta(a)=\eta(a^{-1})$, we obtain \begin{equation*}
\sum_{t\in\mathbb F_q^*}\chi_1(at)\eta(t)
=\sum_{\theta\in\mathbb F_q^*}\eta(a)\chi_1(\theta)\eta(\theta)=\eta(a)\mathcal{G}_1.
\end{equation*}
Statements \ref{GaussSum2} and \ref{GaussSum3} stem from \ref{GaussSum1} by completing the square and utilizing a change of variables.
\end{proof}

\begin{definition}
    
For a nontrivial additive character  $\chi$ and  nonzero elements $a,b \in \Fq{}$, the Kloosterman sum is defined as
\[K(\chi;a,b) := \displaystyle\sum_{s \in \Fq{*}} \chi(as + bs^{-1}).\]
\end{definition}
A proof of the following result can be found in \cite{We48}. 
\begin{proposition}\label{KLsum}
Let $\chi, a, b$ be as in the above. Then, we have
\begin{equation}\label{KSTM}
	|K(\chi;a,b)| \leq 2\sqrt{q}.
\end{equation}
\end{proposition}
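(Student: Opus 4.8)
\noindent\emph{Proof proposal.} This inequality is Weil's bound for Kloosterman sums, a deep result: the standard proof deduces it from the Riemann hypothesis for curves over a finite field, and I sketch that route rather than give a self-contained argument. The plan is first to \emph{normalize} the sum. Since every nontrivial additive character of $\mathbb F_q$ equals $\chi_t$ for a unique $t\in\mathbb F_q^*$, we have $K(\chi;a,b)=\sum_{s\in\mathbb F_q^*}\chi_1\bigl(tas+tbs^{-1}\bigr)$, and replacing $s$ by $(ta)^{-1}s$ turns this into $\sum_{s\in\mathbb F_q^*}\chi_1\bigl(s+\mu s^{-1}\bigr)$ with $\mu:=t^{2}ab\in\mathbb F_q^*$; so it suffices to bound $K:=\sum_{s\in\mathbb F_q^*}\chi_1(s+\mu s^{-1})$. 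For each $n\ge1$ let $\chi_1^{(n)}:=\chi_1\circ\mathrm{Tr}_{\mathbb F_{q^n}/\mathbb F_q}$ be the canonical additive character of $\mathbb F_{q^n}$ and set $K_n:=\sum_{s\in\mathbb F_{q^n}^*}\chi_1^{(n)}(s+\mu s^{-1})$, so that $K_1=K$.

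Next I would bring in the geometry. Using the Artin--Schreier count $\#\{z:z^p-z=c\}=\sum_{\lambda\in\mathbb F_p}\chi_1^{(n)}(\lambda c)$ (an instance of Proposition~\ref{Orthogonality}) and summing over $x\in\mathbb F_{q^n}^*$, one identifies the numbers $K_n$, up to an elementary main term, with the $\mathbb F_{q^n}$-point counts of the smooth projective model $\mathcal C$ of the Artin--Schreier cover of $\mathbb P^1$ attached to the rational function $x+\mu x^{-1}$ --- a $\mathbb Z/p$-cover ramified exactly over $x=0$ and $x=\infty$, at each of which $x+\mu x^{-1}$ has a simple pole. Consequently $L(T):=\exp\bigl(\sum_{n\ge1}\tfrac{K_n}{n}T^n\bigr)$ is, via the character decomposition of the zeta function of $\mathcal C$, the $\chi_1$-isotypic factor of its numerator. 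The facts I would then quote are: (i) $L(T)$ is a polynomial (rationality of the zeta function of a curve); (ii) $\deg L=2$, since by Riemann--Hurwitz the two simple poles (each contributing $2(p-1)$ to the different of $\mathcal C\to\mathbb P^1$) give $g(\mathcal C)=p-1$, so the zeta numerator has degree $2(p-1)$, shared evenly among the $p-1$ nontrivial characters of $\mathbb Z/p$; (iii) the functional equation $L(T)=qT^{2}L\bigl(\tfrac1{qT}\bigr)$ coming from Serre duality, which forces $L(T)=(1-\alpha T)(1-\beta T)$ with $\alpha\beta=q$; and (iv) the Riemann hypothesis for curves, namely $|\alpha|=|\beta|=q^{1/2}$. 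Matching power series in $\log L(T)$ then gives $K_n=-(\alpha^n+\beta^n)$ for every $n$, whence
\[
|K(\chi;a,b)|=|K_1|=|\alpha+\beta|\le|\alpha|+|\beta|=2\sqrt q ,
\]
which is \eqref{KSTM}.

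The one genuinely hard ingredient is step (iv), the Riemann hypothesis for curves over finite fields --- this is precisely Weil's estimate \cite{We48}. Everything else is bookkeeping: the normalization, the point-count identity via additive orthogonality, the genus computation by Riemann--Hurwitz, and the functional equation from duality. If one prefers to avoid algebraic geometry and stay inside $\mathbb F_q$, the same bound, with the same constant $2$, can be reached by Stepanov's polynomial method; that argument is considerably longer but entirely elementary.
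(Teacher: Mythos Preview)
Your sketch is correct: it is the standard route to the Kloosterman bound via the Riemann hypothesis for curves, and the normalization, Artin--Schreier identification, Riemann--Hurwitz genus count, and $L$-function bookkeeping are all in order. The paper itself gives no proof at all --- it simply quotes the inequality and refers the reader to Weil \cite{We48} --- so there is nothing to compare beyond noting that you have supplied exactly the argument the citation points to.
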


\subsection{Discrete Fourier transform} 
For a function $f : \Fq{d} \rightarrow \mathbb{C}$, the Fourier transform $\widehat{f}$ of $f$ is defined as
\begin{equation}\label{FT}
\widehat{f}(m) := q^{-d}\sum_{x \in \Fq{d}} f(x) \chi (-x \cdot m).
\end{equation}

Here, and throughout the paper,  we will use the notation $\chi$ to denote a fixed non-trivial additive character of $\mathbb F_q,$  whose choice will not affect our results. 

The discrete analogue of the Fourier inversion theorem and the Plancherel theorem is as follows.
\begin{equation}
f(x) := \sum_{m \in \Fq{d}}\chi(m \cdot x)\widehat{f}(m),
\end{equation}
\begin{equation}
\sum_{m \in \Fq{d}} \left|{\widehat{f}(m)}\right|^2 = q^{-d}\sum_{x \in \Fq{d}} \left|f(x)\right|^2.
\end{equation}

For a given set $E$, let $I_E(x)$ be the characteristic function:
\begin{equation*}
 I_E(x) :=
 \begin{cases}
 1 &~~x\in E, \\
 0 &~~ x\notin E.
 \end{cases}
\end{equation*}
For simplicity, we write $E(x)$ for $I_E(x)$ if there is no confusion. For $E \subset \Fq{d}$, using the Plancherel theorem, we can easily see that
\begin{equation}\label{PL1}
\sum_{m \in \Fq{d}}\pnorm{}{\widehat{E}(m)}^2 = q^{-d}|E|,
\end{equation}
and it is clear by the definition of the Fourier transform that
\begin{equation}\label{PL2}
\widehat{E}(0,\cdots,0) = q^{-d}|E|.
\end{equation}

\section{The Fourier transform on the sphere w.r.t. the $k$-norm} \label{Ch3}
Let $k$ be an integer with $1\le k\le d.$ For each $t\in \mathbb F_q,$  we define the sphere of radius $t$ with respect to the $k$-norm as follows:
\[S_{k}^t := \{x \in \Fq{d} : \knorm{k}{x} = t\}.\]
Simply, $S_{k}^t$ shall be referred to as a sphere (of radius $t$), without reference to the $k$-norm, if there is no confusion.
In this section, we mostly compute $\widehat{S_{k}^t}$, the Fourier transform on the  sphere $S_{k}^t,$ which will be used in the proof of Theorem \ref{first}.
To this end, we shall first carry out preliminary computation. Let us begin by giving notations.

\subsection{Notation}
For each non-negative integer $d\ge 1,$ let $[d] := \{1,\dots, d\}$. For a subset $I \subset [d]$, we define $\mathcal{F}_I := \{x \in \Fq{d} : x_i \neq 0 \iff i \in I\}$. By convention we set $\mathcal{F}_{\emptyset}
 = \{(0,\ldots,0)\}$.  Then $\Fq{d}$ is a disjoint union of all the `slices' $\mathcal{F}_I$, i.e., \begin{equation}\label{Fslice}
     \Fq{d} = \bigsqcup_{I\subset [d]}\mathcal{F}_I.
 \end{equation}
For a non-empty subset $I = \{i_1, i_2, \ldots, i_e\} \subset  [d]$ with $1\le i_1<i_2< \cdots < i_e\le d$ and $x\in \Fq{d},$  we define $x_I=(x_{i_1},\ldots, x_{i_e})\in \mathbb F_q^e$. In particular, we adopt the convention that   $x_{\emptyset}=(0,\ldots, 0)\in \mathbb F_q^d.$

Recall that for $y \in \Fq{d}$, $\mathcal{Z}(y)$ denotes the number of zero coordinates of $y.$ For example
if $x\in \mathcal{F}_I$, then $\mathcal{Z}(x)= d-|I|$. For $\alpha \in [d]\cup \{0\}$, we define $N_\alpha := \{x \in \Fq{d} : \mathcal{Z}(x) = \alpha\}$. 
Thus, $N_\alpha$ can be written as
\begin{equation}\label{Na}
N_\alpha = \bigsqcup_{\substack{{I \subset [d]}\\ {:\, |I| = d- \alpha}}} \FI.
\end{equation}

\subsection{Preliminary computation}
The following lemma will be used to compute $\widehat{S_{k}^t}(m).$
\begin{lemma} \label{Lem3.1} Let $m = (m_1, m_2, \cdots, m_d)\in \Fq{d}$. For each  $\alpha \in [d]\cup \{0\},$  we have
\begin{equation*}\label{m}
    \displaystyle{\sum_{x \in N_\alpha} \chi(s\norm{x} - m\cdot x)} = \begin{cases} 		\displaystyle{\sum_{\substack{{I \subset [d]}\\{:|I| = d-\alpha}}} \prod_{i \in I}\left(\eta(s)\mathcal{G}_1\chi\left(-\frac{{m_i}^2}{4s}\right)-1\right)} & \text{if } s\neq 0, \\ 
         \displaystyle{\sum_{\substack{{I \subset [d]}\\{:|I| = d-\alpha}}}(q-1)^{\mathcal{Z}(m_{I})}(-1)^{d-\alpha-\mathcal{Z}(m_I)}} & \text{if } s = 0.
     \end{cases}
\end{equation*}
Here, if $\alpha=d$, then $|I| = d-\alpha=0$ and so $I=\emptyset$, in which case the summation on the right-hand side for $s\ne 0$ is set to be $1.$ 
\end{lemma}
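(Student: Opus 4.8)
The plan is to split the sum over $N_\alpha$ according to the slice decomposition \eqref{Na}, observe that on each slice $\FI$ the exponential sum factors completely into a product of one–variable Gauss–type sums, evaluate those factors by Lemma~\ref{GaussSum} (for $s\ne 0$) or by the orthogonality relation of Proposition~\ref{Orthogonality} (for $s=0$), and finally recombine. Since the standard Gauss sum $\CG$ is attached to the particular additive character $\chi_1$ while the statement is phrased for a general nontrivial $\chi$, and since this choice is immaterial for our purposes (as noted right after \eqref{FT}), I would first fix $\chi=\chi_1$ without loss of generality.

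First, using \eqref{Na},
\[
\sum_{x\in N_\alpha}\chi(s\norm{x}-m\cdot x)=\sum_{\substack{I\subset[d]\\ :\,|I|=d-\alpha}}\ \sum_{x\in\FI}\chi(s\norm{x}-m\cdot x).
\]
Fix $I=\{i_1,\dots,i_e\}$ with $e=d-\alpha$. On $\FI$ every coordinate not indexed by $I$ is $0$, so $\norm{x}=\sum_{i\in I}x_i^2$ and $m\cdot x=\sum_{i\in I}m_ix_i$, while $x_{i_1},\dots,x_{i_e}$ run independently through $\Fq{*}$; hence
\[
\sum_{x\in\FI}\chi(s\norm{x}-m\cdot x)=\prod_{i\in I}T_i,\qquad T_i:=\sum_{c\in\Fq{*}}\chi(sc^2-m_ic).
\]
It remains only to identify $T_i$.

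For $s\ne 0$ I would write $T_i=\sum_{c\in\Fq{}}\chi(sc^2-m_ic)-1$ and apply part~\ref{GaussSum2} of Lemma~\ref{GaussSum} with $a=s$ and $b=-m_i$, which gives $T_i=\eta(s)\CG\,\chi\left(-\frac{m_i^2}{4s}\right)-1$; substituting into the product over $i\in I$ and summing over all $I$ with $|I|=d-\alpha$ reproduces the first displayed formula. For $s=0$ we have $T_i=\sum_{c\in\Fq{*}}\chi(-m_ic)$, which by Proposition~\ref{Orthogonality} equals $q-1$ when $m_i=0$ and $-1$ when $m_i\ne 0$. Among the indices $i\in I$, exactly $\mathcal{Z}(m_I)$ satisfy $m_i=0$ and the remaining $|I|-\mathcal{Z}(m_I)=d-\alpha-\mathcal{Z}(m_I)$ satisfy $m_i\ne 0$, so $\prod_{i\in I}T_i=(q-1)^{\mathcal{Z}(m_I)}(-1)^{d-\alpha-\mathcal{Z}(m_I)}$; summing over $I$ gives the second formula. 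The degenerate case $\alpha=d$, i.e. $I=\emptyset$, is immediate, since $N_d=\{(0,\dots,0)\}$ forces the left-hand side to be $\chi(0)=1$, consistent with the empty-product convention for $s\ne 0$ and with $\mathcal{Z}$ of the empty tuple read as $0$ for $s=0$.

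There is no genuine analytic obstacle here: every character-sum evaluation needed is already recorded in Lemma~\ref{GaussSum} and Proposition~\ref{Orthogonality}, and the argument is essentially bookkeeping once the slice decomposition is in place. The only points that call for a little care are the combinatorial count of the zero versus nonzero entries of $m_I$ in the case $s=0$, and keeping the boundary slice $I=\emptyset$ consistent with the notational conventions of Section~\ref{Ch3}.
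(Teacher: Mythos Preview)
Your proposal is correct and follows essentially the same route as the paper: decompose $N_\alpha$ into the slices $\FI$ via \eqref{Na}, factor the resulting sum over $\FI$ into one-variable sums $T_i=\sum_{c\in\Fq{*}}\chi(sc^2-m_ic)$, and then evaluate each factor using Lemma~\ref{GaussSum}\ref{GaussSum2} when $s\ne0$ and Proposition~\ref{Orthogonality} when $s=0$. The paper's argument is identical apart from notation (it writes $T_i$ as $\big(\sum_{u\in\Fq{}}\chi(su^2-m_iu)\big)-1$ from the outset), and your extra remarks on fixing $\chi=\chi_1$ and on the degenerate slice $I=\emptyset$ are accurate and harmless.
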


\begin{proof}  Fix an element $\alpha \in [d]\cup \{0\}.$  It follows that

\begin{align}\label{sm}
\sum_{x \in N_\alpha} \chi(s\norm{x} - m\cdot x) 
& =\sum_{\substack{{I \subset [d]}\\{:|I| = d-\alpha}}}\sum_{x \in \mathcal{F}_I}\chi(s||x_I||-m_I \cdot x_I) \notag \\
& =\sum_{\substack{{I \subset [d]}\\{:|I| = d-\alpha}}}\prod_{i \in I}\left[\left(\sum_{u \in \Fq{}}\chi(su^2-m_iu)\right)-1\right].
\end{align}
The first equality follows from \eqref{Na}, and the second equality just unravels the notation of $\mathcal{F}_I$.
We consider two cases: $s \neq 0$, and $s =0$. For the case where $s\neq 0$, applying Definition \ref{GaussSumDf} and Lemma \ref{GaussSum}, we have
\begin{align*}
\sum_{\substack{{I \subset [d]}\\{:|I| = d-\alpha}}}\prod_{i \in I}\left[\left(\sum_{u \in \Fq{}}\chi(su^2-m_iu)\right)-1\right] 
= \sum_{\substack{{I \subset [d]}\\{:|I| = d-\alpha}}}\prod_{i \in I}\left(\eta(s)\mathcal{G}_1\chi\left(-\frac{m_i^2}{4s}\right)-1\right).
\end{align*}
For the case where $s=0$, \eqref{sm} is equal to
\begin{align*}
\sum_{\substack{{I \in [d]}\\:{|I| = d - \alpha }}}\prod_{\substack{{i \in I}\\{:m_i \neq 0}}} & \left[\left(\sum_{u \in \Fq{}} \chi(-m_iu)\right)-1\right] \prod_{\substack{{j \in I}\\{:m_j =0}}}\left[\left(\sum_{u \in \Fq{}}\chi(-m_ju)\right)-1\right]\\
& = \sum_{\substack{{I \subset [d]}\\{:|I| = d-\alpha}}}(q-1)^{\mathcal{Z}(m_{I})}(-1)^{d-\alpha-\mathcal{Z}(m_I)}.
\end{align*}
This completes the proof.
\end{proof}
\subsection{Computation of $\widehat{S_{k}^t} (m)$}
We are ready to compute the following Fourier transform on the sphere $\widehat{S_{k}^t} (m)$.
\begin{proposition}\label{FSF} Let $t\in \mathbb F_q$. Then, for any $m\in \mathbb F_q^d,$ the Fourier transform $\widehat{S_{k}^t} (m)$ of the sphere can be written
$$\widehat{S_{k}^t} (m)=q^{-d-1} \bigg(A(m,t)+B(m)\bigg),$$
where 
\begin{align*}
    A(m,t) &=\sum_{\alpha = 0}^{k-1}\sum_{\substack{{I \subset [d]}\\{:|I| = d-\alpha}}}\sum_{s\in\Fq{*}}\chi(-st)\prod_{i \in I}\bigg(\eta(s)\mathcal{G}_1\chi\left(-\frac{{m}_i^2}{4s}\right)-1\bigg), \\
    B(m) &= \sum_{\alpha = 0}^{k-1}\sum_{\substack{{I \subset [d]}\\{:|I| = d-\alpha}}}(q-1)^{\mathcal{Z}(m_{I})}(-1)^{d-\alpha-\mathcal{Z}(m_I)}.
\end{align*}
\end{proposition}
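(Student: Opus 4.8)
The plan is to compute $\widehat{S_k^t}(m)$ directly from the definition of the Fourier transform, decompose the sphere according to the number of zero coordinates, and then apply Lemma~\ref{Lem3.1}. First I would write, using \eqref{FT}, that
\[
\widehat{S_k^t}(m) = q^{-d}\sum_{x\in\mathbb F_q^d} S_k^t(x)\,\chi(-x\cdot m).
\]
The next step is to express the condition $\|x\|_k = t$ via orthogonality of characters (Proposition~\ref{Orthogonality}). The subtlety here is that $\|x\|_k$ is defined by cases: it equals $\|x\|$ when $0\le\mathcal Z(x)\le k-1$, and equals $0$ otherwise. So $x\in S_k^t$ precisely when either $\mathcal Z(x)\le k-1$ and $\|x\| = t$, or $\mathcal Z(x)\ge k$ and $t=0$. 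For the purpose of obtaining the stated formula it is cleanest to handle the ``main'' part, $x$ with $\mathcal Z(x)\le k-1$: for such $x$ we have
\[
S_k^t(x) = \frac1q\sum_{s\in\mathbb F_q}\chi\big(s(\|x\|-t)\big),
\]
and summing over $x\in\bigsqcup_{\alpha=0}^{k-1} N_\alpha$ gives
\[
q^{-d-1}\sum_{\alpha=0}^{k-1}\sum_{s\in\mathbb F_q}\chi(-st)\sum_{x\in N_\alpha}\chi(s\|x\|-m\cdot x).
\]
(The remaining part of $S_k^t$, coming from $\mathcal Z(x)\ge k$ when $t=0$, must be shown to be subsumed correctly — see the obstacle below.)

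Having reached this expression, I would split the inner $s$-sum into $s=0$ and $s\ne 0$ and invoke Lemma~\ref{Lem3.1} for each $\alpha$. For $s\ne 0$, Lemma~\ref{Lem3.1} replaces $\sum_{x\in N_\alpha}\chi(s\|x\|-m\cdot x)$ by $\sum_{|I|=d-\alpha}\prod_{i\in I}\big(\eta(s)\mathcal G_1\chi(-m_i^2/4s)-1\big)$, and after multiplying by $\chi(-st)$ and summing over $\alpha=0,\dots,k-1$ and $s\in\mathbb F_q^*$ one obtains exactly $q^{-d-1}A(m,t)$. For $s=0$, Lemma~\ref{Lem3.1} gives $\sum_{|I|=d-\alpha}(q-1)^{\mathcal Z(m_I)}(-1)^{d-\alpha-\mathcal Z(m_I)}$, which is independent of $t$ (since $\chi(-0\cdot t)=1$); summing over $\alpha=0,\dots,k-1$ yields precisely $q^{-d-1}B(m)$. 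This accounts for the claimed decomposition $\widehat{S_k^t}(m) = q^{-d-1}(A(m,t)+B(m))$.

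The main obstacle is bookkeeping the contribution of points with many zero coordinates. When $t\ne 0$, the relation $S_k^t(x)=\frac1q\sum_s\chi(s(\|x\|-t))$ is valid only for $x$ with $\mathcal Z(x)\le k-1$; for $x$ with $\mathcal Z(x)\ge k$ we have $\|x\|_k=0\ne t$, so such $x$ contribute $0$ to the sum — consistent with restricting to $\alpha\le k-1$. When $t=0$, however, every $x$ with $\mathcal Z(x)\ge k$ does lie in $S_k^0$, and one would naively pick up an extra term $q^{-d}\sum_{\alpha\ge k}\sum_{x\in N_\alpha}\chi(-m\cdot x)$. One must verify that this extra term is already encoded in the formula, i.e. that the $s$-summation identity, when applied to the full range $0\le\alpha\le d$, rearranges into the stated $A(m,t)+B(m)$ — equivalently, that restricting the $\alpha$-sum to $0\le\alpha\le k-1$ and simultaneously restricting to the indicator $\frac1q\sum_s\chi(s(\|x\|-t))$ correctly reproduces $S_k^t$. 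I expect the resolution is that for $t=0$ one should instead write $S_k^0(x)$ using $\frac1q\sum_s\chi(s\|x\|)$ for \emph{all} $x$ (which is correct since $\|x\|_k=0$ forces either $\|x\|=0$ with $\mathcal Z(x)\le k-1$, or $\mathcal Z(x)\ge k$ with $\|x\|$ arbitrary — wait, this needs care), and then the $\alpha\ge k$ slices contribute only through the $s=0$ term in a way that matches $B(m)$; checking this consistency carefully for both $t=0$ and $t\ne0$ is the technical heart of the argument. Once that is pinned down, the rest is a direct substitution of Lemma~\ref{Lem3.1} and reindexing.
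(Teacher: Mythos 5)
Your argument is the paper's argument: write $S_k^t(x)=q^{-1}\sum_{s\in\mathbb F_q}\chi\big(s(\|x\|_k-t)\big)$ (valid for \emph{every} $x$ by orthogonality, not just for $\mathcal Z(x)\le k-1$), decompose $\mathbb F_q^d=\bigsqcup_\alpha N_\alpha$, treat the slices with $\alpha\le k-1$ via Lemma~\ref{Lem3.1} (the $s\ne 0$ terms give $A(m,t)$, the $s=0$ terms give $B(m)$), and note that the slices with $\alpha\ge k$ contribute $q^{-d-1}\sum_{\alpha=k}^{d}\sum_{x\in N_\alpha}\chi(-m\cdot x)\sum_{s\in\mathbb F_q}\chi(-st)$, which vanishes by orthogonality in $s$. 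The one loose end you flag, the case $t=0$, resolves in the opposite direction from what you hope: the extra contribution from the slices $\alpha\ge k$ is \emph{not} encoded in $A(m,0)+B(m)$ --- at $m=(0,\dots,0)$ it equals $q^{-d}\sum_{\alpha\ge k}|N_\alpha|>0$ --- so the displayed identity is simply false at $t=0$, and no rearrangement will recover it. The paper's own proof performs exactly your reduction and then discards the $\alpha\ge k$ block \eqref{spm2} with the words ``since $t\ne 0$''; in other words the proposition is genuinely a statement for $t\in\mathbb F_q^*$, which is all that is needed in the proof of Theorem~\ref{first*} because $\nu_E(0)>0$ is trivial. With the hypothesis $t\ne 0$ made explicit, your proof is complete and coincides with the paper's; you should not spend effort trying to prove the $t=0$ case.
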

\begin{proof}
From the definition of the Fourier transform and  the orthogonality of the additive characters (Proposition \ref{Orthogonality}),  it follows that
\begin{align}
\widehat{S_{k}^t} (m) &= q^{-d}\sum_{x\in\Fq{d}} \chi(-m\cdot x)S_{k}^t(x) \nonumber\\
&= q^{-d}\sum_{x\in\Fq{d}} \chi(-m\cdot x)\left(q^{-1}\sum_{s\in\Fq{}}\chi(s(\knorm{k}{x}-t))\right)\label{smFT}. 
\end{align}
Now, recall from \eqref{Na} that $\mathbb F_q^d= \bigsqcup\limits_{\alpha\in [d]\cup \{0\}} N_\alpha$. By the definition of the $k$-norm, \eqref{smFT} can be rewritten:
\begin{align}
\widehat{S_{k}^t} (m)& = q^{-d}\sum_{\alpha=0}^{k-1} \sum_{x\in N_\alpha} \chi(-m\cdot x)\left(q^{-1}\sum_{s\in\Fq{}}\chi(s(\knorm{}{x}-t))\right)  \label{spm1} \\
&+ q^{-d}\sum_{\alpha=k}^{d} \sum_{x\in N_\alpha} \chi(-m\cdot x)\left(q^{-1}\sum_{s\in\Fq{}}\chi(-st)\right).\label{spm2}
\end{align}
   Since $t\ne 0,$  \eqref{spm2} vanishes by the orthogonality of $\chi.$ Hence, by rewriting the \eqref{spm1}, we obtain
\begin{equation}\label{sm3}
\begin{aligned}
&\widehat{S_{k}^t} (m)=q^{-d-1}\sum_{s\in \Fq{}}\chi(-st)\sum_{\alpha=0}^{k-1} \sum_{x\in N_\alpha}\chi(s\norm{x}-m\cdot x).
\end{aligned}
\end{equation}
Using Lemma \ref{Lem3.1} to calculate the summation over $x\in N_\alpha$,  we obtain 
\begin{align*}
\widehat{S_{k}^t} (m) &=q^{-d-1}\sum_{\alpha = 0}^{k-1}\sum_{s\in\Fq{*}}\chi(-st)\sum_{\substack{{I \subset [d]}\\{:|I| = d-\alpha}}} \prod_{i \in I}\bigg(\eta(s)\mathcal{G}_1\chi\left(-\frac{{m_i}^2}{4s}\right)-1\bigg)\\
&+q^{-d-1}\sum_{\alpha = 0}^{k-1}\sum_{\substack{{I \subset [d]}\\{:|I| = d-\alpha}}}(q-1)^{\mathcal{Z}(m_{I})}(-1)^{d-\alpha-\mathcal{Z}(m_I)}.
\end{align*}
By the definition of $A(m,t)$ and $B(m),$ the proof is complete. 
\end{proof}

\section{Proof of main theorem}\label{Ch4}   
In this section, we provide the proof of our main result (Theorem \ref{first}) that can be restated as follows.
\begin{theorem} \label{first*}
If $E \subset \mathbb{F}_q^d$ with $d \ge 2$, and 
$
|E| \ge C  q^{\max\left\{ \frac{d+1}{2},\ d - k \right\}},
$ then $D_k(E) = \mathbb{F}_q$.
\end{theorem}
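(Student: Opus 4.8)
## Proof proposal

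The plan is to run the standard counting-function argument from the Iosevich--Rudnev framework, adapted to the $k$-norm. Fix $t \in \mathbb{F}_q^*$ (the case $t=0$ is trivial, since $D_k(E)$ always contains $0$ once $E$ is large enough to contain two points $x,y$ with $\mathcal{Z}(x-y)\ge k$, or alternatively one checks it directly). Define the counting function
\[
\nu(t) := |\{(x,y) \in E \times E : \|x-y\|_k = t\}| = \sum_{x,y \in E} S_k^t(x-y).
\]
It suffices to show $\nu(t) > 0$ for every $t \in \mathbb{F}_q^*$. Expanding $S_k^t(x-y) = \sum_{m} \widehat{S_k^t}(m)\,\chi(m\cdot(x-y))$ via Fourier inversion and collecting the exponential sums in $x$ and $y$, one gets $\nu(t) = q^{2d}\sum_{m} \widehat{S_k^t}(m)\,|\widehat{E}(m)|^2$. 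Splitting off the $m=0$ term using \eqref{PL2} gives a main term $q^{2d}\widehat{S_k^t}(0)\,|\widehat{E}(0)|^2 = q^{-2}|E|^2 \widehat{S_k^t}(0)\cdot q^{2d}\cdot q^{-2d}\cdots$ — more precisely $\widehat{S_k^t}(0)\cdot q^{2d}\cdot q^{-2d}|E|^2 = |S_k^t|\,|E|^2 q^{-d}$ once one evaluates $\widehat{S_k^t}(0) = q^{-d}|S_k^t|$, and the size $|S_k^t| = (1+o(1))q^{d-1}$ is extracted from Proposition~\ref{FSF} (only the $\alpha=0$, $I=[d]$ piece of $A(0,t)$ contributes the leading $q^{d-1}$; everything else is lower order, using $t\ne 0$). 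So the main term is comparable to $|E|^2 q^{-1}$. The goal is then to bound the error term
\[
\mathcal{R} := q^{2d}\sum_{m \ne 0} \widehat{S_k^t}(m)\,|\widehat{E}(m)|^2
\]
by something strictly smaller than the main term whenever $|E| \ge C q^{\max\{(d+1)/2,\,d-k\}}$.

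For the error term I would insert the formula $\widehat{S_k^t}(m) = q^{-d-1}(A(m,t)+B(m))$ from Proposition~\ref{FSF} and bound $\sum_{m\ne 0}|A(m,t)|\,|\widehat{E}(m)|^2$ and $\sum_{m\ne 0}|B(m)|\,|\widehat{E}(m)|^2$ separately. The $B(m)$ term is purely combinatorial: $B(m)$ depends only on the zero-pattern of $m$, is bounded by $O(q^{\max\{0,\,k-1\}}\cdot q^{\,?})$ on each slice $\mathcal{F}_J$, and summing $|\widehat{E}(m)|^2$ over all $m$ costs only $q^{-d}|E|$ by Plancherel \eqref{PL1}; the worst contribution comes from the slices where $m_I$ has many zero coordinates, and this is exactly where the exponent $d-k$ enters — one finds $\sum_{m\ne 0}|B(m)|\,|\widehat{E}(m)|^2 \lesssim q^{d-k-1}\cdot q^{-d}|E| \cdot (\text{poly factors})$, which after multiplying by $q^{2d}$ and comparing with $|E|^2 q^{-1}$ forces $|E| \gtrsim q^{d-k}$. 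For the $A(m,t)$ term one must control $\sum_{s\in\mathbb{F}_q^*}\chi(-st)\prod_{i\in I}(\eta(s)\mathcal{G}_1\chi(-m_i^2/(4s)) - 1)$; expanding the product over subsets of $I$ and using $|\mathcal{G}_1|=\sqrt q$ from \eqref{GSQETA}, each resulting inner sum over $s$ is (after the change of variable $s\mapsto s^{-1}$ in the relevant factors) a Gauss sum or a Kloosterman sum in the variable $s$, so Proposition~\ref{KLsum} gives square-root cancellation $O(\sqrt q)$. Tracking the powers of $\mathcal{G}_1$ and the subset sizes, $|A(m,t)|$ is dominated by a term of size $\lesssim q^{(d+1)/2}$ for generic $m$ (with degenerate $m$, having zero coordinates, contributing less), and then Cauchy--Schwarz or a direct second-moment bound $\sum_{m\ne 0}|\widehat E(m)|^2 \le q^{-d}|E|$ yields $q^{2d}\cdot q^{-d-1}\cdot q^{(d+1)/2}\cdot q^{-d}|E| = q^{(d-1)/2}|E|$, which is $\ll |E|^2 q^{-1}$ precisely when $|E|\gg q^{(d+1)/2}$. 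Combining the two bounds gives the stated threshold $\max\{(d+1)/2,\,d-k\}$.

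The main obstacle — and the reason Lemma~\ref{main} (deferred to Section~\ref{Ch5}) is needed — is the bookkeeping in the $A(m,t)$ sum. When we expand $\prod_{i\in I}(\eta(s)\mathcal{G}_1\chi(-m_i^2/(4s))-1)$ over all subsets $L\subseteq I$, we get $2^{|I|}$ terms, and then summing over all $I$ with $|I|=d-\alpha$ and over $\alpha = 0,\dots,k-1$ produces a large number of exponential sums of varying sizes; many of them are individually of size as large as the main term, and only after summing against $|\widehat{E}(m)|^2$ over $m$ with the right multiplicities — grouping $m$'s by which coordinates $m_i$ are zero versus nonzero — do the dangerous contributions either cancel or get absorbed. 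This is the ``new elimination method based on combinatorial ideas'' the introduction refers to: one must carefully match each $(\alpha, I, L)$-term with the set of $m$ it can see, observe that $m_i = 0$ for $i$ in the complement kills the corresponding $\chi(-m_i^2/(4s))$ factor, and reorganize the whole double sum so that the surviving pieces are genuinely $O(q^{(d+1)/2})$ times a manageable weight. I would isolate this as the technical heart, state it cleanly as Lemma~\ref{main}, and feed its output into the error estimate above; the rest of the argument (main term extraction, Plancherel, comparison of exponents) is routine.
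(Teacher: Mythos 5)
Your overall architecture (count $\nu(t)$, Fourier inversion, split off a main term, bound the $A$- and $B$-contributions) is the right one, and your treatment of $A(m,t)$ is essentially the paper's: a termwise expansion, $|\mathcal{G}_1|=\sqrt q$, the Weil bound for the (generalized) Kloosterman sums, giving the uniform bound $|A(m,t)|\lesssim q^{(d+1)/2}$, and then Plancherel. No combinatorial cancellation is needed there, and the paper indeed disposes of $A$ exactly this way. (Extracting the main term from $m=0$ via $|S_k^t|\sim q^{d-1}$ is a cosmetic variant of the paper's route, where the main term $q^{-d}|E|^2$ instead emerges from the $m=0$ part of the $B$-sum.)

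The genuine gap is in the $B$-part, which is precisely where you wave hands (``bounded by $O(q^{\max\{0,k-1\}}\cdot q^{?})$ on each slice'', ``poly factors'') and which you misidentify as routine while locating the real difficulty in $A(m,t)$. The claimed estimate $\sum_{m\neq 0}|B(m)|\,|\widehat{E}(m)|^2\lesssim q^{d-k-1}\cdot q^{-d}|E|$ is not obtainable by the triangle-inequality-plus-Plancherel route you describe: from the definition of $B(m)$, the single term $\alpha=0$, $I=[d]$ contributes $(q-1)^{\mathcal{Z}(m)}$, so for the $\sim dq$ vectors $m\neq 0$ with $\mathcal{Z}(m)=d-1$ one only gets $|B(m)|\lesssim q^{d-1}$ (not $q^{d-k}$), and bounding $|\widehat{E}(m)|^2\le q^{-2d}|E|^2$ the resulting contribution to $\nu_E(t)$ is of order $q^{d-1}\cdot q^{d-1}\cdot q\cdot q^{-2d}|E|^2\sim q^{-1}|E|^2$ --- the same order as the main term, with no smallness. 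What rescues the argument is an exact cancellation: the full alternating sum $\sum_{\alpha=0}^{d}\sum_{|I|=d-\alpha}(q-1)^{\mathcal{Z}(m_I)}(-1)^{d-\alpha-\mathcal{Z}(m_I)}$ factors as $\prod_{i:\,m_i=0}\bigl(1+(q-1)\bigr)\prod_{i:\,m_i\neq 0}\bigl(1+(-1)\bigr)$, hence vanishes for every $m\neq 0$ (and equals $q^d$ at $m=0$); consequently $B(m)$ equals minus the tail over $k\le\alpha\le d$, where $|I|\le d-k$ forces $\mathcal{Z}(m_I)\le d-k$ and thus $|B(m)|\lesssim q^{d-k}$ for $m\neq 0$. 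This binomial-theorem cancellation (the paper's identity $B_{m2}=0$ in Section~\ref{Ch5}, i.e.\ the ``elimination method'' of Lemma~\ref{main}\ref{ABC2}) is the actual heart of the proof and is exactly what your proposal neither states nor proves; with it supplied, your $m\neq 0$ bound becomes $q^{d-1}\cdot q^{d-k}\cdot q^{-d}|E|=q^{d-k-1}|E|$ and your threshold $\max\{(d+1)/2,\,d-k\}$ follows as claimed.
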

\begin{proof}
Let \( d_E : E \times E \to \mathbb{F}_q \) denote the $k$-distance on $E$ defined by $
d_E(x,y) = \|x - y\|_{k}$ for $x,y \in E$.

Note that \( d_E \) is surjective if and only if for each \( t \in \mathbb{F}_q \), the set \( d_E^{-1}(t) \) is nonempty, or equivalently, the collection \( \{d_E^{-1}(t)\}_{t \in \mathbb{F}_q} \) forms a partition of \( E \times E \).

For $E \subset \Fq{d}$ and $t \in \Fq{}$, let $\nu_{E}(t)$ be the cardinality of the set $d_E^{-1}(t)$, i.e., $\nu_E(t)=\left|d_E^{-1} (t)\right|$. Then, to prove Theorem \ref{first*}, it suffices to show that $\nu_E(t) > 0$ for all  $t \in \Fq{}$. It is obvious that $\nu_E(0) > 0$ since $E$ is nonempty. Now it remains to  show  $\nu_E(t) > 0$
 for all nonzero $t \in \Fq{*}$.
By definition, $\nu_E(t)$ can be written
\begin{equation}\label{Nu}
\nu_{E}(t) = \sum_{\substack{{x,y\in E}\\{:\knorm{k}{x-y}=t}}}1 = \sum_{x,y\in E}S_{k}^t(x-y).
\end{equation}
Then we use  the Fourier inversion formula and the definition of the Fourier transform to obtain
\begin{align}\label{Nu1}
    \nu_E(t) & = \sum_{x,y \in \Fq{d}}E(x)E(y)\bigg{(}\sum_{m \in \Fq{d}}\chi(m\cdot(x-y))\widehat{S_{k}^t}(m)\bigg{)} \notag\\
    & = \sum_{m \in \Fq{d}} \widehat{S_{k}^t}(m) \sum_{x,y \in \Fq{d}}\overline{\chi(-m\cdot x)E(x)}\chi(-m\cdot y)E(y)\\
    & = q^{2d}\sum_{m\in\Fq{d}}\widehat{S_{k}^t}(m)|\widehat{E}(m)|^2.\notag
\end{align}
Invoking Proposition \ref{FSF},  we see that
 \begin{align}\label{AB}
\nu_{E}(t) &=q^{d-1}\sum_{m\in \Fq{d}}\pnorm{}{\widehat{E}(m)}^2\bigg(A(m,t)+B(m)
\bigg).
\end{align}

Now we  require the following lemma, a key ingredient in deriving our main result,   contains  technically challenging terms that do not appear in the original MS problem.

\begin{lemma}\label{main} Let $A(m,t)$, and $B(m)$ be defined as in Proposition \ref{FSF}.  Then for every $E \subset \Fq{d}$, the following inequalities hold.
\begin{enumerate}
[label=\textup{(\roman*)}, ref=\textup{(\roman*)}]
\item \label{ABC1} $\left|\displaystyle{\sum_{m\in \Fq{d}}\pnorm{}{\widehat{E}(m)}^2}A(m,t)\right| \lesssim q^{-\frac{d-1}{2}}|E|,$
\item \label{ABC2} $\displaystyle{\sum_{m\in \Fq{d}}\pnorm{}{
\widehat{E}(m)}^2}B(m) \gtrsim q^{-d}|E|^2 - q^{-k}|E|$.
\end{enumerate}
\end{lemma}

With Lemma \ref{main} in hand, let us complete the proof of Theorem \ref{first*}. A proof of Lemma \ref{main} will be given in the next section.
The combination of \eqref{AB} and Lemma \ref{main} yields 
\begin{equation*}
	\nu_E(t) \gtrsim q^{-1}|E|^2 -q^{d-k-1}|E|-q^{\frac{d-1}{2}}|E|.
\end{equation*}
Observe that if \( |E| \gtrsim q^{d-k} \), then the term \( q^{-1}|E|^2 \) dominates \( q^{d-k-1}|E| \); similarly, if \( |E| \gtrsim q^{(d+1)/2} \), then \( q^{-1}|E|^2 \) dominates \( q^{(d-1)/2}|E| \). Thus, we obtain the desired conclusion that 
if $|E| \ge C  q^{\max\left\{ \frac{d+1}{2},\ d - k \right\}},$ then  $\nu_E(t) >0.$
This completes the proof.
\end{proof}

\section{Proof of Lemma \ref{main} }\label{Ch5}

For the first part \ref{ABC1} of Lemma  \ref{main}, we write
\begin{align*}
A(m,t)=\sum_{\alpha = 0}^{k-1}\sum_{\substack{{I \subset [d]}\\{:|I| = d-\alpha}}}\sum_{s \neq 0}\chi(-st)&\prod_{i \in I}\bigg(\eta(s)\mathcal{G}_1\chi\bigg{(}-\frac{{m}_i^2}{4s}\bigg{)}-1\bigg)\\
=\sum_{\alpha = 0}^{k-1}\sum_{\substack{{I \subset [d]}\\{:|I| = d-\alpha}}}\sum_{s\neq 0} \chi(-st)&\bigg{(} \sum_{\beta=0}^{d-\alpha}\sum_{\substack{{J \subset I}\\{:|J| = d-\alpha-\beta}}} \eta^{d-\alpha-\beta}(s)\mathcal{G}_1^{d-\alpha-\beta}\chi\bigg{(}-\frac{\norm{m_J}}{4s}\bigg{)}(-1)^{\beta}\bigg{)},\label{Amt:second}
\end{align*}
where  we express the product over $i \in I$ in terms of the summation over $\beta.$ It follows by the triangle inequality that 
\begin{equation*}
\pnorm{}{A(m,t)} \leq \sum_{\alpha = 0}^{k-1}\sum_{\substack{{I \subset [d]}\\{:|I| = d-\alpha}}} \sum_{\beta=0}^{d-\alpha}\sum_{\substack{{J \subset I}\\{:|J| = d-\alpha-\beta}}}\pnorm{}{\mathcal{G}_1}^{d-\alpha-\beta}\left|\sum_{s\neq 0}  \eta^{d-\alpha-\beta}(s)\chi\bigg{(}-st-\frac{\norm{m_J}}{4s}\bigg{)}\right|.
\end{equation*}
Recall from \eqref{GSQETA} that \( |\mathcal{G}_1| = \sqrt{q} \), and note that the sum over \( s \in \mathbb{F}_q^* \) is a generalized Kloosterman sum, which is dominated by \( 2\sqrt{q} \) as shown in \eqref{KSTM}. Hence,  for every $m\in \mathbb F_q^d$ and $t\in \mathbb F_q^*,$  we have
$$ |A(m, t)| \leq  \sum_{\alpha = 0}^{k-1}\sum_{\substack{{I \subset [d]}\\{:|I| = d-\alpha}}} \sum_{\beta=0}^{d-\alpha}\sum_{\substack{{J \subset I}\\{:|J| = d-\alpha-\beta}}} 2 q^{\frac{d-\alpha-\beta+1}{2}} \lesssim q^{\frac{d+1}{2}},$$
where the last inequality follows from the simple observation that  $q^{\frac{d+1}{2}}$ dominates  $q^{\frac{d-\alpha-\beta+1}{2}}$ for all $\alpha, \beta\ge 0.$  Using this upper bound of $|A(m, t)|$ and the Plancherel theorem,  the part \ref{ABC1} of Lemma \ref{main} is proven as follows:
$$\left|\displaystyle{\sum_{m\in \Fq{d}}\pnorm{}{\widehat{E}(m)}^2}A(m,t)\right| \leq
\displaystyle{\sum_{m\in \Fq{d}}\pnorm{}{\widehat{E}(m)}^2}|A(m,t)| \lesssim  q^{\frac{d+1}{2}} \sum_{m\in \mathbb F_q^d} |\widehat{E}(m)|^2  = q^{-\frac{d-1}{2}} |E|. $$

Now, we prove the part \ref{ABC2} of Lemma \ref{main}. Recall that 

\begin{equation*}
\sum_{m\in \Fq{d}}\pnorm{}{\widehat{E}(m)}^2B(m) = \sum_{m\in \Fq{d}}\pnorm{}{\widehat{E}(m)}^2\sum_{\alpha = 0}^{k-1}\sum_{\substack{{I \subset [d]}\\{:|I| = d-\alpha}}}(q-1)^{\mathcal{Z}(m_{I})}(-1)^{d-\alpha-\mathcal{Z}(m_I)}. 
\end{equation*}

We observe that the summation over $\alpha$ can be rewritten:
\begin{align*}
    &
    \sum_{m \in \mathbb{F}_q^d} |\widehat{E}(m)|^2 \sum_{\alpha = 0}^{k-1} \sum_{\substack{I \subset [d] \\ :|I| = d - \alpha}} (q - 1)^{\mathcal{Z}(m_I)} (-1)^{d - \alpha - \mathcal{Z}(m_I)} \\
    &
    =
    \sum_{m \in \mathbb{F}_q^d} |\widehat{E}(m)|^2 \left(\sum_{\alpha = 0}^{d} - \sum_{\alpha = k}^{d}\right) \sum_{\substack{I \subset [d] \\ :|I| = d - \alpha}} (q - 1)^{\mathcal{Z}(m_I)} (-1)^{d - \alpha - \mathcal{Z}(m_I)} \\
    &=: B_{main}+B_{auxiliary}. 
\end{align*} 
This decomposition allows us to analyze the summation over $\alpha$ in terms of the full summation over 
\( 0 \leq \alpha \leq d \) and the summation over  \( k \leq \alpha \leq d \). It is obvious that
\begin{equation}\label{mineq}
        \sum_{m\in \Fq{d}}\pnorm{}{\widehat{E}(m)}^2B(m) \geq B_{main}-|B_{auxiliary}|.
\end{equation}
We can simply bound the auxiliary term as follows:
\begin{equation}\label{aux}
    |B_{auxiliary}| \lesssim 
    q^{-d}|E|q^{d-k} = q^{-k}|E|.    
\end{equation}

Next, let us estimate a lower bound of the term $B_{main}.$  For each $I\subset [d]$ and $m\in \mathbb F_q^d$, let $\beta:=\mathcal{Z}(m_I)$. Then  $0\leq \beta \leq |I|$. Using a new variable $\beta$
 we can write

\begin{align*}
B_{\text{main}} 
&=
\sum_{m \in \mathbb{F}_q^d} |\widehat{E}(m)|^2 \sum_{\alpha=0}^{d} \sum_{\substack{I \subset [d] \\ :|I| = d-\alpha}} (q-1)^{\mathcal{Z}(m_I)} (-1)^{d-\alpha-\mathcal{Z}(m_I)}\\
&= 
\sum_{\alpha=0}^{d} \sum_{\beta=0}^{d-\alpha} \sum_{\substack{I \subset [d] \\ :|I|=d-\alpha}} \sum_{\substack{m \in \mathbb{F}_q^d \\ :\mathcal{Z}(m_I)=\beta}} |\widehat{E}(m)|^2 (q-1)^\beta (-1)^{d-\alpha-\beta}.
\end{align*}
By the definition of the indicator function $\mathds{1}_{\{0,1, \cdots, d-\alpha\}} (\beta)$,  the above term $B_{\text{main}}$ can be rewritten as follows:
$$B_{\text{main}}=  \sum_{\beta=0}^{d} \sum_{\alpha=0}^{d}\sum_{\substack{I \subset [d] \\ :|I|=d-\alpha}} \sum_{\substack{m \in \mathbb{F}_q^d \\ :\mathcal{Z}(m_I)=\beta}}|\widehat{E}(m)|^2 (q-1)^\beta (-1)^{d-\alpha-\beta} \mathds{1}_{\{0,1, \cdots, d-\alpha\}} (\beta).$$
Fixing the variable \(\beta\), we perform a change of variables from \(\alpha\) to \(r\) using the relation $\alpha = d-\beta-r$. Then we obtain the expression
\begin{align*}    
B_{\text{main}}
&=\sum_{\beta=0}^{d} \sum_{r=0}^{d-\beta} \sum_{\substack{I \subset [d] \\ :|I|=\beta+r}} \sum_{\substack{m \in \mathbb{F}_q^d \\ :\mathcal{Z}(m_I)=\beta}} |\widehat{E}(m)|^2 (q-1)^\beta (-1)^r.
\end{align*}
Note that for  each $\beta$ and $I\subset [d],$ the set \ensuremath{\{m \in \mathbb F_q^d ~|~ \mathcal{Z}(m_I)=\beta\} } is a union of subsets  $\{m \in \mathbb F_q^d ~|~ \mathcal{Z}(m_I)=\beta, \mathcal{Z}(m)=w \}$ for $0\leq w \leq d$, and that \( r:=|I|-\beta \leq d - w \).
Thus, using a variable $w$,
$B_{\text{main}}$ can be written
\[
B_{\text{main}}
=
\sum_{w = 0}^d \sum_{\beta = 0}^{w} \sum_{r = 0}^{d - w} 
\sum_{\substack{I \subset [d] \\ :|I| = \beta + r }} 
\sum_{\substack{m \in \mathbb{F}_q^d  \\ :\mathcal{Z}(m_I) = \beta \\: \mathcal{Z}(m) = w}} 
|\widehat{E}(m)|^2 (q - 1)^{\beta} (-1)^r.
\]
By distinguishing between the cases \(\mathcal{Z}(m) = d\) (that is, \(m = (0, \dots, 0)\)) and \(\mathcal{Z}(m) < d\), we have
\begin{align*}
    B_{main} 
    &=
    \sum_{w = 0}^{d-1} \sum_{\beta = 0}^{w} \sum_{r = 0}^{d - w} \sum_{\substack{I \subset [d] \\ :|I| = \beta + r }} \sum_{\substack{{m \in \mathbb{F}_q^d}  \\  { :\mathcal{Z}(m_I) = \beta} \\ {:\mathcal{Z}(m) = w}}} |\widehat{E}(m)|^2 (q - 1)^{\beta} (-1)^r\\
    &+
    \sum_{\beta = 0}^d\sum_{\substack{I \subset [d] \\ :|I| = \beta}} \sum_{\substack{{ m = (0,\cdots,0)}  \\  { :\mathcal{Z}(m_I) = \beta}}} |\widehat{E}(m)|^2 (q - 1)^{\beta}.
\end{align*}
Now we divide the summation over $\beta$ in the first series of summations into two cases: $\beta < w$ and $w=\beta$.
Then we obtain
\begin{align*}
    B_{main}
    &=
    \sum_{w = 0}^{d-1} \sum_{\beta = 0}^{w-1} \sum_{r = 0}^{d - w} \sum_{\substack{I \subset [d] \\ :|I| = \beta + r }} \sum_{\substack{{m \in \mathbb{F}_q^d}  \\  { :\mathcal{Z}(m_I) = \beta} \\ {:\mathcal{Z}(m) = w}}} |\widehat{E}(m)|^2 (q - 1)^{\beta} (-1)^r \\
    &+
    \sum_{w = 0}^{d-1} \sum_{r = 0}^{d - w} \sum_{\substack{I \subset [d] \\ :|I| = w + r }} \sum_{\substack{{m \in \mathbb{F}_q^d}  \\  { :\mathcal{Z}(m_I) = w} \\ {:\mathcal{Z}(m) = w}}} |\widehat{E}(m)|^2 (q - 1)^{w} (-1)^r\\
    &+
    \sum_{\beta = 0}^d\sum_{\substack{I \subset [d] \\ :|I| = \beta}} \sum_{\substack{{ m = (0,\cdots,0)}  \\  { :\mathcal{Z}(m_I) = \beta}}} |\widehat{E}(m)|^2 (q - 1)^{\beta}\\
    &=: B_{m1} + B_{m2}+B_{m3}.
\end{align*}
Hence, a lower bound of $B_{\text{main}}$ can be expressed as follows:
\begin{equation}\label{mineq2}
    B_{\text{main}} \geq -|B_{m1}| + B_{m2} +B_{m3}.
\end{equation}
Notice that $|N_{d-\alpha}|=(q-1)^\alpha\sim q^\alpha$  and 
 by \eqref{PL2} that $\sum\limits_{m \in N_{d-\alpha}} |\widehat{E}(m)|^2 \lesssim q^{-2d+\alpha}|E|^2$. It follows that
\begin{align}
|B_{m1}| &= 
\sum_{w = 0}^{d-1} \sum_{\beta = 0}^{w-1} \sum_{r = 0}^{d - w} \sum_{\substack{I \subset [d] \\ :|I| = \beta + r }} \sum_{\substack{{m \in \mathbb{F}_q^d}  \\  { :\mathcal{Z}(m_I) = \beta}\\{:\mathcal{Z}(m) = w}}} |\widehat{E}(m)|^2 (q - 1)^{\beta}
\nonumber
\\
&\label{err}\lesssim  
q^{-2d+(d-w)}|E|^2q^{w-1} =q^{-d-1}|E|^2.
\end{align}
Recall that
\begin{equation*}
    B_{m2} = 
    \sum_{w = 0}^{d-1} \sum_{r = 0}^{d - w} \sum_{\substack{I \subset [d] \\ :|I| = w + r }} \sum_{\substack{{m \in \mathbb{F}_q^d}  \\  { :\mathcal{Z}(m_I) = w} \\ {:\mathcal{Z}(m) = w}}} |\widehat{E}(m)|^2 (q - 1)^{w} (-1)^r.
\end{equation*}
 Now we show that $B_{m2} = 0$. The main idea for this is that for each $1 \leq w \leq d-1$, $J \subset [d]$, $m \in \mathcal{F}_{[d]\setminus J}$, $B_{m2} $ will be manipulated to involve the summations
 \begin{equation}\label{Zerosum}
 \sum_{r = 0}^{d-w}\sum_{\substack{{T \subset [d]\setminus J}\\{:|T| = r}}}(-1)^{r},
 \end{equation}
 which  will be turned into 
 \[
 \sum_{r = 0}^{d-w}(-1)^{r}(1)^{d-w-r}\binom{d-w}{r}=0.
 \]
To be precise, first changing the order of summations, we write
\begin{equation*}
   B_{m2}=
    \sum_{w = 0}^{d-1} \sum_{\substack{m \in \mathbb{F}_q^d\\ {:\mathcal{Z}(m) = w}}}\sum_{r = 0}^{d-w}\sum_{\substack{I \subset [d]\\{:|I| = w + r} \\ {:\mathcal{Z}(m_I) = w}}}|\widehat{E}(m)|^2 (q-1)^w(-1)^{r}.
\end{equation*}
Recall from \eqref{Fslice} that 
\begin{equation}\label{FJslice}
\Fq{d} = \bigsqcup_{J \subset [d]} \mathcal{F}_{[d] \setminus J}.
\end{equation}
Thus using a variable $J$,  $B_{m2}$  can be rewritten
\begin{align*}
    B_{m2} &= 
    \sum_{w = 0}^{d-1} \sum_{J \subset [d]}\sum_{\substack{{m \in \mathbb{F}_q^d} \\ {:\mathcal{Z}(m) = w} \\ {:m \in \mathcal{F}_{[d]\setminus J}}}}\sum_{r = 0}^{d-w}\sum_{\substack{I \subset [d]\\{:|I| = w + r} \\ {:\mathcal{Z}(m_I) = w}}}|\widehat{E}(m)|^2 (q-1)^w(-1)^{r}.
\end{align*}
For a fixed $m \in \mathcal{F}_{[d]\setminus J}$, $\mathcal{Z}(m) = w$ if and only if $|J| = w$. Consequently, we obtain
\begin{align}\label{IT}
    B_{m2} &=
    \sum_{w = 0}^{d-1} \sum_{\substack{{J \subset [d]}\\{:|J| = w}}}\sum_{\substack{{m \in \mathbb{F}_q^d} \\ {:m \in \mathcal{F}_{[d]\setminus J}}}}\sum_{r = 0}^{d-w}\sum_{\substack{I \subset [d]\\{:|I| = w + r} \\ {:\mathcal{Z}(m_I) = w}}}|\widehat{E}(m)|^2 (q-1)^w(-1)^{r}.
\end{align}
For each $J\subset [d]$, choose $m\in \mathcal{F}_{[d]\setminus J}$. Then there is an $I$ with
$|I|\geq |J|$ such that  $  {\mathcal{Z}(m_I) = |J|}$, and for any such $I$ (independent of the choice of $m\in \mathcal{F}_{[d]\setminus J}$), we have $J\subset I.$  Let $T:=I\setminus J.$ Changing the fifth summation over $I$ in \eqref{IT} into the summation over $T$, we obtain
\begin{equation}\label{Tbinom}
\begin{aligned}
    B_{m2} &= 
    \sum_{w = 0}^{d-1} \sum_{\substack{{J \subset [d]}\\{:|J| = w}}}\sum_{\substack{{m \in \mathbb{F}_q^d} \\ {:m \in \mathcal{F}_{[d]\setminus J}}}}|\widehat{E}(m)|^2 (q-1)^w\sum_{r = 0}^{d-w}\sum_{\substack{{T \subset [d]\setminus J}\\{:|T| = r}}}(-1)^{r}.
\end{aligned}
\end{equation}
The last double summation  of \eqref{Tbinom} can be turned into 
\begin{equation*}
\begin{aligned}
    \sum_{r = 0}^{d-w}\sum_{\substack{{T \subset [d]\setminus J}\\{:|T| = r}}}(-1)^{r}=\sum_{r = 0}^{d-w}(-1)^{r}\left( \sum_{\substack{{T \subset [d]\setminus J}\\{:|T| = r}}}1 \right)=\sum_{r = 0}^{d-w}(-1)^{r}(1)^{d-w-r}\binom{d-w}{r}.
\end{aligned}
\end{equation*}
By the binomial theorem, we have
\begin{equation*}
    \sum_{r = 0}^{d-w}(-1)^{r}(1)^{d-w-r}\binom{d-w}{r}= (1+(-1))^{d-w} =0.
\end{equation*}
Therefore, we obtain the desired result:
\begin{equation}\label{can}
B_{m_2} = 0. 
\end{equation}
Finally, we estimate the term $B_{m3}$.  Recall that
\begin{equation*}
    B_{m3} = \sum_{\beta = 0}^d\sum_{\substack{I \subset [d] \\ :|I| = \beta}} \sum_{\substack{{ m = (0,\cdots,0)}  \\  { :\mathcal{Z}(m_I) = \beta}}} |\widehat{E}(m)|^2 (q - 1)^{\beta}.
\end{equation*}
Since $|\widehat{E}(m)|^2 = q^{-2d} |E|^2$ for \( m = (0, \ldots, 0) \), we get
\begin{equation}\label{emp}
    B_{m3} = 
    \sum_{\beta = 0}^d\sum_{\substack{I \subset [d] \\ :|I| = \beta}} q^{-2d}|E|^2(q - 1)^{\beta} \sim q^{-d}|E|^2.
\end{equation}
Combining the estimates in \eqref{mineq}, \eqref{aux}, \eqref{mineq2}, \eqref{err}, \eqref{can}, and \eqref{emp}, we conclude that

\begin{equation*}
    \sum_{m \in \mathbb{F}_q^d} |\widehat{E}(m)|^2 B(m) \gtrsim q^{-d}|E|^2 - q^{-d-1}|E|^2 - q^{-k}|E| \gtrsim q^{-d}|E|^2 - q^{-k}|E|,
\end{equation*}
which proves the second part \ref{ABC2} of Lemma \ref{main}.

\end{document}